\pgfplotsset{compat=1.17}
\DeclareFontFamily{OMX}{MnSymbolE}{}
\DeclareSymbolFont{MnLargeSymbols}{OMX}{MnSymbolE}{m}{n}
\DeclareFontShape{OMX}{MnSymbolE}{m}{n}{
    <-6>  MnSymbolE5
   <6-7>  MnSymbolE6
   <7-8>  MnSymbolE7
   <8-9>  MnSymbolE8
   <9-10> MnSymbolE9
  <10-12> MnSymbolE10
  <12->   MnSymbolE12
}{}
\DeclareFontShape{OMX}{MnSymbolE}{b}{n}{
    <-6>  MnSymbolE-Bold5
   <6-7>  MnSymbolE-Bold6
   <7-8>  MnSymbolE-Bold7
   <8-9>  MnSymbolE-Bold8
   <9-10> MnSymbolE-Bold9
  <10-12> MnSymbolE-Bold10
  <12->   MnSymbolE-Bold12
}{}
\let\llangle\@undefined
\let\rrangle\@undefined
\DeclareMathDelimiter{\llangle}{\mathopen}%
                     {MnLargeSymbols}{'164}{MnLargeSymbols}{'164}
\DeclareMathDelimiter{\rrangle}{\mathclose}%
                     {MnLargeSymbols}{'171}{MnLargeSymbols}{'171}
\newtheorem{thm}{Theorem}[section]
\newtheorem{defn}[thm]{Definition}
\newtheorem{prop}[thm]{Proposition}
\newtheorem{lemma}[thm]{Lemma}
\newtheorem{lem}[thm]{Lemma}
\theoremstyle{remark}
\newtheorem{ex}[thm]{Example}
\newtheorem{rem}[thm]{Remark}
\newcommand{\K}{\mathbb{K}}
\renewcommand{\H}{\mathcal{H}}
\renewcommand{\leq}{\leqslant} 
\renewcommand{\geq}{\geqslant}
\renewcommand{\bar}{\overline}
\newcommand{\ch}{\mathtt{C}}
\DeclareMathOperator{\Sym}{Sym}
\DeclareMathOperator{\Der}{Sig}
\DeclareMathOperator{\D}{\mathfrak{F}}
\definecolor{cerulean}{rgb}{0,.48,.65} 
\definecolor{magenta}{rgb}{.5,0,.5} 
\definecolor{dred}{rgb}{.5,0,0} 
\definecolor{green}{rgb}{0,.5,0} 
\definecolor{blue}{rgb}{0,0,0.5} 
\definecolor{black}{rgb}{0,0,0} 
\definecolor{dgreen}{rgb}{0,.3,0} 
\definecolor{vdred}{rgb}{.3,0,0} 
\definecolor{red}{rgb}{1,0,0} 
\definecolor{salmon}{rgb}{0.98,0.50,0.45} 
\definecolor{gray}{rgb}{.5,.5,.5} 
\definecolor{seagreen}{rgb}{0.13,0.70,0.67} 
\definecolor{chartreuse}{rgb}{0.40,0.80,0.00}
\definecolor{cornflower}{rgb}{0.39,0.58,0.93} 
\definecolor{gold}{rgb}{0.80,0.68,0.00}
\definecolor{cornellred}{rgb}{0.7, 0.11, 0.11} 
\definecolor{csugreen}{rgb}{0.12,0.31,0.17}
\definecolor{csugold}{rgb}{0.78,0.76,0.45}
\definecolor{bucknellorange}{rgb}{0.91,0.46,0.13}
\tikzset{pics/.cd,
cube/.style args={#1/#2/#3/#4}{code={
\coordinate (O) at (0,0,0);
\coordinate (A) at (0,#2,0);
\coordinate (B) at (0,#2,#3);
\coordinate (C) at (0,0,#3);
\coordinate (D) at (#1,0,0);
\coordinate (E) at (#1,#2,0);
\coordinate (F) at (#1,#2,#3);
\coordinate (G) at (#1,0,#3);
\draw[black,fill=black!80] (O) -- (C) -- (G) -- (D) -- cycle;
\draw[black,fill=black!30] (O) -- (A) -- (E) -- (D) -- cycle;
\draw[black,fill=black!10] (O) -- (A) -- (B) -- (C) -- cycle;
\draw[black,fill=black!20,opacity=0.8] (D) -- (E) -- (F) -- (G) -- cycle;
\draw[black,fill=black!20,opacity=0.6] (C) -- (B) -- (F) -- (G) -- cycle;
\draw[black,fill=black!20,opacity=0.8] (A) -- (B) -- (F) -- (E) -- cycle;
\node at (0.5*#1,0.5*#2,0.5*#3) {#4};
}}}
\tikzset{pics/.cd,
   tcube/.style args={#1/#2/#3/#4/#5}{
      code={
         \coordinate (O) at (0,0,0);
         \coordinate (A) at (0,#2,0);
         \coordinate (B) at (0,#2,#3);
         \coordinate (C) at (0,0,#3);
         \coordinate (D) at (#1,0,0);
         \coordinate (E) at (#1,#2,0);
         \coordinate (F) at (#1,#2,#3);
         \coordinate (G) at (#1,0,#3);
         \draw[black,fill=black!5,opacity=0.6] (O) -- (A) -- (B) -- (C)-- cycle;
         \draw[black,fill=black!5,opacity=1] (O) -- (A) -- (E) -- (D)-- cycle;
         \draw[black,fill=black!5,opacity=0.8] (O) -- (D) -- (G) -- (C)-- cycle;
         \node at (0.5*#1,0.6*#2,0.5*#3) {{\small #5}};

         \draw[dotted] (E) -- (F) -- (G);
         \draw[dotted] (B) -- (F);
      }
   }
}
\tikzset{pics/.cd,
ccube/.style args={#1/#2/#3/#4}{code={
\coordinate (O) at (0,0,0);
\coordinate (A) at (0,#2,0);
\coordinate (B) at (0,#2,#3);
\coordinate (C) at (0,0,#3);
\coordinate (D) at (#1,0,0);
\coordinate (E) at (#1,#2,0);
\coordinate (F) at (#1,#2,#3);
\coordinate (G) at (#1,0,#3);
\draw[black,fill=#4!80] (O) -- (C) -- (G) -- (D) -- cycle;
\draw[black,fill=#4!30] (O) -- (A) -- (E) -- (D) -- cycle;
\draw[black,fill=#4!10] (O) -- (A) -- (B) -- (C) -- cycle;
\draw[black,fill=#4!20,opacity=0.8] (D) -- (E) -- (F) -- (G) -- cycle;
\draw[black,fill=#4!20,opacity=0.6] (C) -- (B) -- (F) -- (G) -- cycle;
\draw[black,fill=#4!20,opacity=0.8] (A) -- (B) -- (F) -- (E) -- cycle;
}}}
\tikzset{pics/.cd,
linecube/.style args={#1/#2/#3}{code={
\coordinate (O) at (0,0,0);
\coordinate (A) at (0,#2,0);
\coordinate (B) at (0,#2,#3);
\coordinate (C) at (0,0,#3);
\coordinate (D) at (#1,0,0);
\coordinate (E) at (#1,#2,0);
\coordinate (F) at (#1,#2,#3);
\coordinate (G) at (#1,0,#3);
\draw[black] (O) -- (D) -- (E) -- (F) -- (B) -- (C) -- cycle;
\draw[dotted] (D) -- (G) -- (F);
\draw[dotted] (C) -- (G);
}}}
\tikzset{pics/.cd,
lwing/.style args={#1/#2/#3/#4}{code={
\coordinate (O) at ( 0, 0, 0);
\coordinate (A) at ( 0, 0,#3);
\coordinate (B) at ( 0,#2,#3);
\coordinate (C) at ( 0,#2, 0);
\draw[black,fill=red!20] (O) -- (A) -- (B) -- (C) -- cycle;
\node at (0,0.5*#2,0.5*#3) {#4};
}}}
\tikzset{pics/.cd,
mwing/.style args={#1/#2/#3/#4}{code={
\coordinate (O) at ( 0, 0, 0);
\coordinate (A) at (#1, 0, 0);
\coordinate (B) at (#1,#2, 0);
\coordinate (C) at ( 0,#2, 0);
\draw[black,fill=green!20] (O) -- (A) -- (B) -- (C) -- cycle;
\node at (0.5*#1,0.5*#2,0) {#4};
}}}
\tikzset{pics/.cd,
rwing/.style args={#1/#2/#3/#4}{code={
\coordinate (O) at ( 0, 0, 0);
\coordinate (A) at (#1, 0, 0);
\coordinate (B) at (#1, 0,#3);
\coordinate (C) at ( 0, 0,#3);
\draw[black,fill=blue!20] (O) -- (A) -- (B) -- (C) -- cycle;
\node at (0.5*#1,0,0.5*#3) {#4};
}}}
\pgfmathsetmacro{\xx}{0.5}
\tikzset{pics/.cd,
    nilcube/.style args={#1/#2}{
        code={
            \pic at (0,-#1,0) {ccube={#1/#1/#1/#2}};
            \pic at (#1,0,0) {ccube={#1/#1/#1/#2}};
            \pic at (0,0,#1) {ccube={#1/#1/#1/#2}};
            \pic at (0,#1,0) {linecube={2*#1/-2*#1/2*#1}};
        }
    }
}
\DeclareDocumentCommand \braket { o m m } {
        \IfNoValueTF {#3} {
            \left\langle #1\, \middle|\, #2 \right\rangle
        }{
            \left\langle #2\,\middle|_{#1}\, #3 \right\rangle
        }
    }
\title{New linear invariants of hypergraphs}
\author{Peter A. Brooksbank}
\thanks{${}^*$ Corresponding author}
\address{Department of Mathematics, Bucknell University, Lewisburg, PA 17837, USA}
\email{pbrooksb@bucknell.edu}
\subjclass{05C65, 15A69, 15-04, 15B30}
\author{Clara R. Chaplin}
\address{Department of Mathematics, Bucknell University, Lewisburg, PA 17837, USA}
\email{crc035@bucknell.edu}
\date{\today}
\begin{document}

\begin{abstract}
We introduce a parameterized family of invariants for $\ell$-uniform 
hypergraphs.
To each $\K$-linear transformation $T:\K^{\ell}\to \K^r$ we associate a function 
$\Der(-,T)$ that maps $\ell$-uniform hypergraphs to $\K$-vector spaces. 
Given an $\ell$-uniform hypergraph $\H=(V,E)$, we 
use $\Der(\H,T)$ to define an equivalence relation $\equiv_T$ on $V$ 
called \emph{$T$-fusion}, which determines 
a quotient hypergraph $\mathfrak{F}(\H,T)$ called the \emph{$T$-frame of $\H$}.
We show that the map $U:\K^{\ell}\to \K$, where 
$U(\lambda)=\lambda(1)+\cdots+\lambda(\ell)$,
is universal in that $\Der(\H,T)$ 
embeds in $\Der(\H,U)$, and
$U$-fusion refines $T$-fusion for any $T:\K^{\ell}\to\K^r$.
We further show that $\mathfrak{F}(\mathfrak{F}(\H,U),U)=\mathfrak{F}(\H,U)$ 
for any $\ell$-uniform hypergraph $\H$, so
$\mathfrak{F}(-,U)$ is a closure function on the set of 
$\ell$-uniform hypergraphs. 
We explore the properties of this one-time simplification 
of a hypergraph.
\end{abstract}

\maketitle

\section{Introduction}
\label{sec:intro}

Invariants of the adjacency matrix of a graph are commonly deployed 
to gain insight into the structure of the graph. 
Indeed spectral graph theory studies 
the properties of graphs through the lens of such invariants~\cite{Chung-Book}.
Recent works such 
as~\citelist{\cite{CD-Hypergraph-Spectra}\cite{PZ-hypergraph-tensor}} 
have sought to generalize spectral methods to hypergraphs. 
It has been helpful in such efforts to identify the given hypergraph with a 
multilinear product (\textit{tensor}), and   
the origins of the current work lie in a 
recent spectral theory of tensors~\cite{FMW}.
New families of linear invariants of tensors were introduced 
in~\cite{Strata} to detect, in a given tensor, hidden patterns 
belonging to a broad range of parameterized 
families. Our objective is 
to adapt those invariants to hypergraphs, and then use them to
explore combinatorial features. 


Our main contribution is to define 
an equivalence relation called \textit{fusion}
on the vertices of a hypergaph. In 
that sense our concern is similar to works such 
as~\citelist{\cite{BS-Symmetry}\cite{BS-Building-Blocks}} that study 
hypergraph operators 
compatible with equivalence relations. However, our process 
works in reverse. Instead of starting with a \textit{prescribed} 
equivalence on the vertices of a hypergraph $\H$, 
to each linear transformation $T$ we compute
operators $\Der(\H,T)$ called \textit{$T$-signals} 
and use them to \textit{define} an equivalence called \textit{$T$-fusion}. 
We show that the transformation $U:\K^{\ell}\to \K$, where 
$U(\lambda)=\lambda(1)+\cdots+\lambda(\ell)$,
is universal in that, for any $T:\K^{\ell}\to\K^r$, the $\K$-space 
$\Der(\H,T)$ embeds in $\Der(\H,U)$. As a consequence,
$U$-fusion is the most refined of 
the fusion relations.

We then study the function that maps 
$\H$ to the quotient $\D(\H)$, called the \textit{$U$-frame} of $\H$, 
defined by that equivalence.
One outcome of this study is that $\D(-)$ is a closure function 
on the set of finite connected $\ell$-uniform hypergraphs 
(Theorem~\ref{thm:idempotent}). 
Another outcome is a polynomial-time algorithm that, given 
such a hypergraph $\H$, computes the quotient $\D(\H)$.
Thus, our technique offers an efficiently computible, one-time 
simplification of a given hypergraph. 

We have made a Python implementation of the techniques described 
in the paper publicly available on GitHub~\cite{Python}.
In Section~\ref{sec:applications} we report on some experiments carried out 
with our programs. 
The final section of the paper briefly 
discusses two families of hypergraphs whose members are distinguished by 
the full tensor invariants in~\cite{Strata}, but not
by our restricted invariants. This suggests there is much more 
to learn from and about this nascent spectral theory of hypergraphs.

\section{Hypergraphs}
\label{sec:background}
In this section we summarize just the hypergraph concepts and 
notation we need throughout, and refer to~\cite{Berge-Book} for a thorough 
treatment of the subject.
\smallskip

Denote by $Y^X$ the set of
functions from $X$ to $Y$. For $f\in Y^X$ and $x\in X$, we 
denote the image of $x$ under $f$ by $f(x)$. However, when 
$Y$ is itself a set of functions, 
we find it convenient to adopt the notation $x\mapsto f_x$, so that 
$f_x(-)$ can be used for evaluation of $f_x$. Throughout the paper, 
$\ell\geq 3$ is a fixed integer, and 
\[
[\ell]=\{1,\ldots,\ell\}.
\]
Write $Y^{[\ell]}$ as $Y^{\ell}$ for short, and
let $\Sym(\ell)$ be the group of permutations of $[\ell]$.
For $f\in Y^{\ell}$ and $\sigma\in\Sym(\ell)$, define
$f^{\sigma}\in Y^{\ell}$, where $f^{\sigma}(a)=f(a^{\sigma})$ 
for $a\in [\ell]$. There is an equivalence relation on $Y^{\ell}$ 
where $f$ is equivalent to $g$ if there exists $\sigma\in\Sym(\ell)$
such that $g=f^{\sigma}$. The equivalence class 
\begin{align*}
[f] &= \{\,f^{\sigma} \mid \sigma\in \Sym(\ell)\,\} 
\end{align*}
of $f$ is called the \textit{orbit} of $f$ under $\Sym(\ell)$.

\begin{defn}[Hypergraph]
\label{def:hypergraph}
An \emph{$\ell$-uniform hypergraph} is a pair $\H=(V,E)$ of finite sets, 
where $V$ is the \emph{vertex set}, and  
$E$ is a set of orbits of $V^{\ell}$, called \emph{(hyper)edges}, 
under the natural action of $\Sym(\ell)$.
\end{defn} 

Our notational choices merit some explanation.
First, we allow repetitions of vertices in an edge, which is why we consider 
tuples $V^{\ell}$ rather than $\ell$-subsets ${V\choose \ell}$. 
Secondly, we consider \textit{undirected} hypergraphs, 
so edges are really \textit{multisets} rather than sequences
since the order of the vertices does not matter. 
In contrast to other multiset notations,
we account for symmetry using the action of $\Sym(\ell)$ 
on tuples because this choice aligns better with the invariants upon 
which our methods rely. In particular, 
by choosing a representative, we regard $e\in E$ 
as an element of $V^{\ell}$ and obtain other elements of the 
equivalence class as $e^{\sigma}$ for 
$\sigma\in\Sym(\ell)$.

\begin{defn}[Connectedness]
    \label{def:connected}
A \emph{walk} in an $\ell$-uniform hypergraph 
$\H$ is an alternating sequence $x_1,e_1,x_2,\ldots,x_{m-1},e_{m-1},x_m$
of vertices and edges, 
where for each $1\leq i\leq m-1$, there exist $a_i,b_i\in [\ell]$ such that 
$e_i(a_i)=x_{i}$ and $e_i(b_i)=x_{i+1}$. The \emph{reachability relation} 
on $V$, where $y$ is related to $x$ if there is a walk in $\H$ 
from $x$ to $y$, is an equivalence relation; 
the \emph{connected components} of $\H$ are the equivalence classes. 
We say that $\H$ is \emph{connected} if it has just one connected component.
\end{defn}

The final hypergraph notion we need is that of a quotient. As 
in the theory of graphs, the quotient of a hypergraph $\H$ provides 
a simplified view of $\H$. 

\begin{defn}[Quotients] 
    \label{def:quotient}
Let $\H=(V,E)$ be an $\ell$-uniform hypergraph, 
and let $\approx$ be an equivalence relation on $V$.
Define:
\begin{itemize}
\item $\bar{V}=\{\,\bar{x} \mid x\in V\,\}$, 
the set of $\approx$ equivalence classes; and 
\item $\bar{E} =\{ \bar{e} \mid e\in E\}\subseteq \bar{V}^{\ell}$, 
where $\bar{e}(a)=\bar{e(a)}$ for $a\in [\ell]$.
\end{itemize}
The $\ell$-uniform hypergraph $\bar{\H}=(\bar{V},\bar{E})$ 
is called the \emph{quotient} of $\H$ by $\approx$.
\end{defn}

\begin{ex}
    \label{ex:components}
Let $\H=(V,E)$ be any $\ell$-uniform hypergraph. We have seen that 
equivalence classes of $V$ under the reachability relation are the 
connected components of $\H$. Thus, in the quotient hypergraph 
$\bar{\H}=(\bar{V},\bar{E})$ for the reachability relation, 
$\bar{V}$ contains one vertex for each connected component.
For $e\in E$, there is a connected component $C\subseteq V$ such 
that $e(a)\in C$ for all $a\in [\ell]$, so it follows that $\bar{e}(a)=\bar{e(a)}=\bar{x}$, 
where $\bar{x}=C$. Thus, $\bar{E}$ contains exactly one edge $\bar{e}$ 
for each $\bar{x}\in\bar{V}$ with $\bar{e}(a)=\bar{x}$ for all $a\in [\ell]$. 
\end{ex}

Observe that $\bar{E}$ may contain multisets that are not sets 
even when $E\subseteq {V \choose \ell}$.

\begin{ex}
    \label{ex:5-vertex}
Let $\H=(V,E)$ be the (connected) 3-uniform hypergraph where $V=\{u,v,w,x,y\}$ and 
$E=\{\,(u,v,w)\,,\,(u,w,x)\,,\,(u,x,y)\,\}$, as depicted in Figure~\ref{fig:5-vertex}.

\begin{figure}[H]
    \centering
    \begin{tikzpicture}[scale = 0.8]
    \fill[red!20] (90:2) -- (162:2) -- (234:2) -- cycle;   
    \fill[green!20] (90:2) -- (234:2) -- (306:2) -- cycle;  
    \fill[blue!20] (90:2) -- (306:2) -- (18:2) -- cycle;    

    \node[circle, draw, fill=white] (1) at (90:2) {$u$};
    \node[circle, draw, fill=white] (2) at (162:2) {$v$};
    \node[circle, draw, fill=white] (3) at (234:2) {$w$};
    \node[circle, draw, fill=white] (4) at (306:2) {$x$};
    \node[circle, draw, fill=white] (5) at (18:2) {$y$};

   \draw[thin] (1) -- (2);  
   \draw[thin] (2) -- (3);  
   \draw[thin] (3) -- (4);  
   \draw[thin] (4) -- (5);  
   \draw[thin] (5) -- (1);  
   \draw[thin] (1) -- (3);  
   \draw[thin] (1) -- (4);  
    \end{tikzpicture}
    \caption{A hypergraph on 5 vertices with 3 hyperedges.}
    \label{fig:5-vertex}
\end{figure}
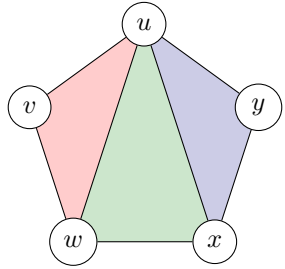

Figure~\ref{fig:quotients} shows quotients of $\H$ by 
two different equivalence relations.
Note, the generality of our edge notation means that quotients 
of $\ell$-uniform hypergraphs remain $\ell$-uniform hypergraphs.
The relation on the right is the one obtained from the 
automorphism group of the hypergraph~\cite[Section 5]{BS-Building-Blocks}.

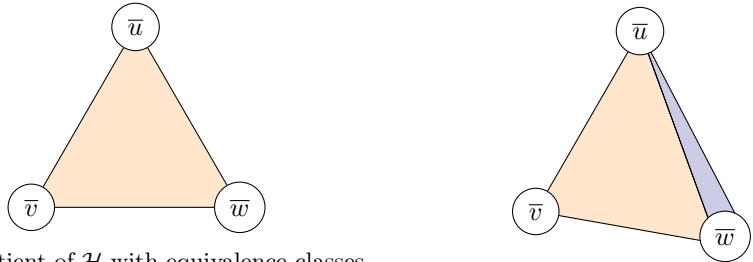
\begin{figure}[htbp]
    \centering

    \begin{subfigure}[b]{0.48\textwidth} 
        \centering
        \begin{tikzpicture}[scale=0.8]
        \fill[orange!20] (90:2) -- (210:2) -- (330:2) -- cycle;

        \node[circle, draw, fill=white, minimum size=0.5cm] (A) at (90:2) {$\bar{u}$};
        \node[circle, draw, fill=white, minimum size=0.5cm] (B) at (210:2) {$\bar{v}$};  
        \node[circle, draw, fill=white, minimum size=0.5cm] (C) at (330:2) {$\bar{w}$};  
    
        \draw[thin] (A) -- (B);
        \draw[thin] (B) -- (C);
        \draw[thin] (C) -- (A);
    \end{tikzpicture}
        \caption{Quotient of $\H$ with equivalence 
        classes $\bar{u}=\{u\}$, $\bar{v}=\{v,x\}$, $\bar{w}=\{w,y\}$.
        \newline ~}
        \label{fig:pictureA}
    \end{subfigure}
    \hfill 
    \begin{subfigure}[b]{0.48\textwidth} 
        \centering
        
        \begin{tikzpicture}[scale=0.8]
        \fill[orange!20] (90:2) -- (210:2) -- (310:2) -- cycle;
        \fill[blue!20] (90:2) -- (310:2) -- (325:2) -- cycle;

        \draw[thin] (90:2) -- (310:2) -- (325:2) -- cycle;
        \draw[thin] (90:2) -- (210:2) -- (310:2) -- cycle;
    
        \node[circle, draw, fill=white, minimum size=0.5cm] (A) at (90:2) {$\bar{u}$};
        \node[circle, draw, fill=white,  minimum size=0.5cm] (B) at (210:2) {$\bar{v}$};  
        \node[circle, draw, fill=white,  minimum size=0.5cm] (C) at (315:2) {$\bar{w}$};  
    \end{tikzpicture}
        \caption{Quotient of $\H$ with equivalence 
        classes $\bar{u}=\{u\}$, $\bar{v}=\{v,y\}$, $\bar{w}=\{w,x\}$. 
        }
        \label{fig:pictureB}
    \end{subfigure}

    \caption{Quotients of $\H$ by two different equivalence relations.}
    \label{fig:quotients}
\end{figure}
\end{ex}

\section{Signals}
\label{sec:derivations}
Since an edge $e$ of an $\ell$-uniform hypergraph 
$\H=(V,E)$ is an orbit $[e]=\{e^{\sigma}\mid \sigma\in\Sym(\ell)\}$, 
to each $\ell$-uniform hypergraph $\H=(V,E)$ one can associate an 
array $\Gamma:V^{\ell}\to \{0,1\}$
where, for $f\in V^{\ell}$,
\begin{align}
\label{eq:hypergraph-array}
\Gamma(f) &= \left\{ \begin{array}{ll} 1 & \mbox{if}\;[f]\in E, \\
    0 & \mbox{else}. \end{array} \right.
\end{align}
Each axis of $\Gamma$ (labelled by $a\in[\ell]$) has the same number of 
coordinates (indexed by $V$), so $\Gamma$ is a cubical array.
It is also symmetric because $\Gamma(f)=\Gamma(f^{\sigma})$ 
for all $f\in V^{\ell}$ and all $\sigma\in\Sym(\ell)$.
The $5\times 5\times 5$ symmetric array $\Gamma$ for the 5-vertex hypergraph in Example~\ref{ex:5-vertex} is 
pictured in Figure~\ref{fig:array}.

\begin{figure}[htb]
    \centering
    
    \begin{minipage}{0.48\textwidth}
        \centering
        \begin{tikzpicture}[scale = 1.0] 
            \fill[red!20] (90:2) -- (162:2) -- (234:2) -- cycle;   
            \fill[green!20] (90:2) -- (234:2) -- (306:2) -- cycle;  
            \fill[blue!20] (90:2) -- (306:2) -- (18:2) -- cycle;    
        
            \node[circle, draw, fill=white] (1) at (90:2) {$u$};
            \node[circle, draw, fill=white] (2) at (162:2) {$v$};
            \node[circle, draw, fill=white] (3) at (234:2) {$w$};
            \node[circle, draw, fill=white] (4) at (306:2) {$x$};
            \node[circle, draw, fill=white] (5) at (18:2) {$y$};
        
           \draw[thin] (1) -- (2);  
           \draw[thin] (2) -- (3);  
           \draw[thin] (3) -- (4);  
           \draw[thin] (4) -- (5);  
           \draw[thin] (5) -- (1);  
           \draw[thin] (1) -- (3);  
           \draw[thin] (1) -- (4);  
        \end{tikzpicture}
    \end{minipage}%
    \hfill
    \begin{minipage}{0.48\textwidth}
        \centering
        \def\N{5} 
        \newcommand{\drawcube}[4][black]{
          \pgfmathsetmacro{\X}{#2-1} 
          \pgfmathsetmacro{\Y}{#3-1} 
          \pgfmathsetmacro{\Z}{#4-1} 
            \coordinate (A) at (\X,\Y,\Z);
            \coordinate (B) at (\X+1,\Y,\Z);
            \coordinate (C) at (\X+1,\Y+1,\Z);
            \coordinate (D) at (\X,\Y+1,\Z);
            \coordinate (E) at (\X,\Y,\Z+1);
            \coordinate (F) at (\X+1,\Y,\Z+1);
            \coordinate (G) at (\X+1,\Y+1,\Z+1);
            \coordinate (H) at (\X,\Y+1,\Z+1);
            
            \filldraw[draw=black!70, fill=#1!15, opacity=0.8] (A) -- (B) -- (F) -- (E) -- cycle; 
            \filldraw[draw=black!70, fill=#1!18, opacity=0.8] (A) -- (D) -- (H) -- (E) -- cycle; 
            \filldraw[draw=black!70, fill=#1!12, opacity=0.8] (A) -- (B) -- (C) -- (D) -- cycle; 
            \filldraw[draw=black!70, fill=#1!22, opacity=0.8] (B) -- (C) -- (G) -- (F) -- cycle; 
            \filldraw[draw=black!70, fill=#1!25, opacity=0.8] (D) -- (C) -- (G) -- (H) -- cycle; 
            \filldraw[draw=black!70, fill=#1!30, opacity=0.8] (E) -- (F) -- (G) -- (H) -- cycle; 
        }

        \tdplotsetmaincoords{70}{110} 
        
        \begin{tikzpicture}[tdplot_main_coords, scale=0.7, 
            line join=round, line cap=round,
            outerframe/.style={gray!70, very thick},
            hiddenframe/.style={gray!50, thin, dotted} 
        ]

            \def\u{1}\def\v{2}\def\w{3}\def\x{4}\def\y{5}
            \foreach \a/\b/\c in {1/\x/\y, 1/\y/\x, \x/1/\y, \x/\y/1, \y/1/\x, \y/\x/1}{\drawcube[blue!70!black]{\a}{\b}{\c}}
            \foreach \a/\b/\c in {1/\x/\w, 1/\w/\x, \x/1/\w, \x/\w/1, \w/1/\x, \w/\x/1}{\drawcube[green!70!black]{\a}{\b}{\c}}
            \foreach \a/\b/\c in {1/\v/\w, 1/\w/\v, \v/1/\w, \v/\w/1, \w/1/\v, \w/\v/1}{\drawcube[red]{\a}{\b}{\c}}
        
            \coordinate (C000) at (0,0,0); \coordinate (CN00) at (\N,0,0);
            \coordinate (CNN0) at (\N,\N,0); \coordinate (C0N0) at (0,\N,0);
            \coordinate (C00N) at (0,0,\N); \coordinate (CN0N) at (\N,0,\N);
            \coordinate (CNNN) at (\N,\N,\N); \coordinate (C0NN) at (0,\N,\N);


        \end{tikzpicture}
    \end{minipage}
    
    \caption{On the left is a 3-uniform hypergaph $\H$ on 5 vertices. On the 
    right is the $5\times 5\times 5$ array $\Gamma$ corresponding to $\H$, 
    where the $3!=6$ nonzero entries for each colored edge are represented as blocks 
    of the same color. This helps to see the symmetry of $\Gamma$.}
    \label{fig:array}
\end{figure}

In this paper we work directly with the hypergraph $\H$ rather than with 
the array $\Gamma(\H)$. 
The reason for mentioning $\Gamma(\H)$ at all is that arrays are representations 
of tensors, and it is from tensors that we get the invariants 
we shall use throughout the paper. Indeed, the following key 
definition is an adaption to hypergraphs of the 
notion of a \textit{derivation} for tensors~\cite[Definition 5.1]{Strata}.

\begin{rem}
\label{rem:infinite-field}
Throughout the paper, $\K$ denotes an arbitrary field.
The specific role of $\K$ is inconsequential except that we shall
eventually need it to contain enough distinct elements,
so for convenience we henceforth assume that $\K$ 
is infinite. 
\end{rem}

\begin{defn}[$T$-Signal]
    \label{def:derivation}
Given an $\ell$-uniform hypergraph $\H=(V,E)$ and a linear 
transformation $T:\K^{\ell}\to\K^r$, where $r$ is a positive integer, a 
\emph{$T$-signal} of $\H$
is a function $\delta:[\ell]\to \K^V$ sending $a\mapsto (\delta_a:V\to \K)$ 
satisfying 
\begin{align}
\label{eq:derivation}
\forall e\in E, & ~\forall \sigma\in \Sym(\ell), & T(\delta * e^{\sigma})=0\in\K^r,
\end{align}
where $\delta * e^{\sigma}:[\ell]\to \K$ maps $a\mapsto \delta_a(e^{\sigma}(a))$. Put
\begin{align}
\label{eq:der-space}
    \Der(\H,T) &:= \{\;\delta\in (\K^V)^{\ell}\mid \delta~\mbox{is a $T$-signal of $\H$}\;\}.
\end{align}
\end{defn}

Let $\mathbf{1}:V\to\K$ be the constant function $\mathbf{1}(x)=1$ for 
$x\in V$, and let $\mathbf{1}_a$ denote the function $\mathbf{1}$ 
operating on coordinate $a\in[\ell]$. Similarly, let $\mathbf{0}:V\to\K$ 
be the zero function $\mathbf{0}(x)=0$ for $x\in V$.
Each $T:\K^{\ell}\to\K^r$ has a kernel 
$\ker(T)=\{\lambda:[\ell]\to\K\mid T(\lambda)=0\}$. 
Observe for $\lambda\in\ker(T)$ that the function  
\begin{align}
    \delta(\lambda):[\ell]\to \K^V, &&  \delta(\lambda)_a=\lambda(a)\mathbf{1}_a
\end{align} 
is a $T$-signal of any hypergaph $\H$.
Such signals are constant on every axis, so we call them
\textit{constant signals} and put 
\begin{align}
\label{eq:constant-der}
\Der_0(T) &= \{\;\delta(\lambda)\mid \lambda\in\ker(T)\;\}.
\end{align}
Constant signals clearly tell us nothing useful about the structure 
of a hypergraph.
For $a\in [\ell]$, define $\hat{a}:\K^{\ell}\to\K$ where 
$\hat{a}(b)=1$ if $a=b$ and $\hat{a}(b)=0$ otherwise, 
so $\{\,\hat{a}\mid a\in [\ell]\,\}$ is the standard basis of $\K^{\ell}$.
Similarly, let $\{\,\hat{x}\mid x\in V\,\}$ be the standard basis of $\K^V$, 
where $\hat{x}(y)=1$ if $x=y$ and $\hat{x}(y)=0$ otherwise.

\begin{ex}
    \label{ex:U-signal}
    Consider the `triangle' hypergraph $\H$ on vertex set $V=\{u,v,w\}$ 
    with the single hyperedge $e=(u,v,w)$. Let $T:\K^3\to \K$ be the linear 
    transformation defined by $T(\lambda)=\lambda(1)-2\lambda(2)+\lambda(3)$.  
    Then $\ker(T)$ is spanned by the vectors
    $2\cdot \hat{1}+\hat{2}$ and $\hat{1}-\hat{3}$, so 
    $\Der_0(T)$ contains the (constant) signals
    \begin{align*}
    \delta(2\hat{1}+\hat{2})=(2\cdot\mathbf{1}_1,\mathbf{1}_2,\mathbf{0}_3), &&
    \delta(\hat{1}-\hat{3})=(\mathbf{1}_1,\mathbf{0}_2,-\mathbf{1}_3).
    \end{align*}
    One can easily verify that 
    $\delta:[3]\to \K^V$ defined by
    $\delta_1=2\hat{w}$, $\delta_2=\hat{u}+\hat{v}$, $\delta_3=2\hat{w}$
    is a (non-constant) $T$-signal.
    For example, if $\sigma\in\Sym(\ell)$ is the 3-cycle $(1,2,3)$, then
    \begin{align*}
        T(\delta*e^{\sigma}) &= \delta_1(e(1^{\sigma})) - 2\delta_2(e(2^{\sigma})) + \delta_3(e(3^{\sigma})) \\
        &= \delta_1(e(2)) - 2\delta_2(e(3)) + \delta_3(e(1)) \\
        &= 2\hat{w}(v)-2(\hat{u}(w)+\hat{v}(w))+2\hat{w}(u) \\
        &= 0 - 2(0+0) + 0 = 0.
    \end{align*}
\end{ex}

The next result establishes the linear nature of the invariants $\Der(\H,T)$.

\begin{lemma}
    \label{lem:der-space}
    Let $T:\K^{\ell}\to \K^r$, and let $\H=(V,E)$ be an $\ell$-uniform hypergraph. Then
    $\Der(\H,T)$ is a subspace of $(\K^V)^{\ell}$, 
    and $\Der_0(T)$ is a subspace of $\Der(\H,T)$.
\end{lemma}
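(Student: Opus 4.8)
The plan is to verify the two subspace claims directly from the definitions, exploiting the fact that the defining condition~\eqref{eq:derivation} is \emph{linear} in the signal $\delta$. First I would recall that $(\K^V)^{\ell}$ is a $\K$-vector space under coordinatewise operations: for $\delta,\delta'\in(\K^V)^{\ell}$ and $c\in\K$, set $(\delta+\delta')_a=\delta_a+\delta'_a$ and $(c\delta)_a=c\,\delta_a$, where the operations on the right take place in $\K^V$. To show that $\Der(\H,T)$ is a subspace it then suffices to check closure under addition and scalar multiplication, since the zero signal $a\mapsto\mathbf{0}$ evidently satisfies~\eqref{eq:derivation}.

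The crux is the observation that, for each fixed edge $e\in E$ and each fixed $\sigma\in\Sym(\ell)$, the assignment $\delta\mapsto\delta*e^{\sigma}$ is a $\K$-linear map $(\K^V)^{\ell}\to\K^{\ell}$. Indeed, evaluating at any $a\in[\ell]$ gives $(\delta*e^{\sigma})(a)=\delta_a(e^{\sigma}(a))$, the value of $\delta_a$ at the point $e^{\sigma}(a)\in V$; since the operations on $(\K^V)^{\ell}$ are coordinatewise, $((\delta+\delta')*e^{\sigma})(a)=(\delta_a+\delta'_a)(e^{\sigma}(a))=\delta_a(e^{\sigma}(a))+\delta'_a(e^{\sigma}(a))$, and likewise for scalars. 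Composing with the linear map $T$, I conclude that $\delta\mapsto T(\delta*e^{\sigma})$ is linear, so its zero set is a subspace. As $\Der(\H,T)$ is the intersection over all $e\in E$ and all $\sigma\in\Sym(\ell)$ of these zero sets, it is a subspace of $(\K^V)^{\ell}$.

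For the second assertion, I would first invoke the observation preceding the lemma, which already records that every constant signal $\delta(\lambda)$ with $\lambda\in\ker(T)$ satisfies~\eqref{eq:derivation}; hence $\Der_0(T)\subseteq\Der(\H,T)$. It then remains to see that $\Der_0(T)$ is itself a subspace. This follows because the map $\ker(T)\to(\K^V)^{\ell}$, $\lambda\mapsto\delta(\lambda)$, is $\K$-linear: from $\delta(\lambda)_a=\lambda(a)\mathbf{1}_a$ one reads off $\delta(\lambda+\mu)=\delta(\lambda)+\delta(\mu)$ and $\delta(c\lambda)=c\,\delta(\lambda)$. Since $\ker(T)$ is a subspace of $\K^{\ell}$ and $\Der_0(T)$ is by definition its image under this linear map, $\Der_0(T)$ is a subspace, which completes the argument.

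I do not anticipate a genuine obstacle: the whole argument is bookkeeping around the bilinear pairing $(\delta,e^{\sigma})\mapsto\delta*e^{\sigma}$. The only point requiring a moment's care is to note that $\delta\mapsto\delta*e^{\sigma}$ is linear in $\delta$ \emph{for each fixed $e$ and $\sigma$} (it is not linear in $e$, but that plays no role here), so that the universally quantified condition~\eqref{eq:derivation} carves out an intersection of kernels of linear maps rather than anything more delicate.
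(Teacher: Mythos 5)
Your proposal is correct and follows essentially the same route as the paper: both arguments rest on the observation that $\delta\mapsto\delta*e^{\sigma}$ is linear in $\delta$ for fixed $e$ and $\sigma$, compose with the linearity of $T$ to get closure of $\Der(\H,T)$ under the vector-space operations, and handle $\Der_0(T)$ by noting that $\lambda\mapsto\delta(\lambda)$ is a linear map of $\ker(T)$ into $\Der(\H,T)$. Your packaging of the first part as an intersection of kernels of linear maps is only a cosmetic variant of the paper's direct verification that $\delta+k\acute{\delta}$ again satisfies condition~\eqref{eq:derivation}.
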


\begin{proof}
Evidently, $(\K^V)^{\ell}$ is a $\K$-vector space under 
addition $(\delta+\acute{\delta})_a:=\delta_a+\acute{\delta}_a$
and scalar product $(k\delta)_a:=k\delta_a$.
For $\delta,\acute{\delta}\in (\K^V)^{\ell}$,  
$e\in E$, $\sigma\in\Sym(\ell)$, $k\in \K$, $a\in [\ell]$,
\begin{align*}
[(\delta+k\acute{\delta})*e^{\sigma}](a) &=(\delta+k\acute{\delta})_a(e^{\sigma}(a))
& \mbox{(definition of}\;\delta*e^{\sigma})\\
&=(\delta_a+k\acute{\delta}_a)(e^{\sigma}(a)) & \mbox{(operations on $(\K^V)^{\ell}$)}\\
&=\delta_a(e^{\sigma}(a))+k\acute{\delta}_a(e^{\sigma}(a)) & \mbox{(operations on $\K^V$)}\\
&=(\delta*e^{\sigma})(a)+k(\acute{\delta}*e^{\sigma})(a) & \mbox{(definition of}\;\delta*e^{\sigma}).
\end{align*}
Hence, $(\delta+k\acute{\delta})*e^{\sigma}= 
(\delta*e^{\sigma})+k(\acute{\delta}*e^{\sigma})\in \K^{\ell}$.
Since $T:\K^{\ell}\to \K^r$ is a $\K$-linear transformation, 
for $\delta,\acute{\delta}\in \Der(\H,T)$
we have 
\begin{align*}
T((\delta+k\acute{\delta})*e^{\sigma}) &=
T((\delta*e^{\sigma})+k(\acute{\delta}*e^{\sigma})) \\
&=T(\delta*e^{\sigma}) + k\,T(\acute{\delta}*e^{\sigma})=0.
\end{align*}
It follows that $\delta+k\acute{\delta}\in\Der(\H,T)$, so $\Der(\H,T)$ 
is a subspace of $(\K^V)^{\ell}$.
Finally, let $\lambda\in\ker(T)\subseteq \K^{\ell}$. 
For $e\in E$, $\sigma\in\Sym(\ell)$,
we have
\begin{align*}
  \forall a\in [\ell], &&  \delta(\lambda)*e^{\sigma}(a) &= \delta(\lambda)_a(e^{\sigma}(a))=\lambda(a),
\end{align*}
so $\delta(\lambda)*e^{\sigma}=\lambda$. Hence, $T(\delta(\lambda)*e^{\sigma})=T(\lambda)=0$, 
so $\delta(\lambda)\in\Der(\H,T)$. The map $\lambda\mapsto \delta(\lambda)$ 
is a $\K$-linear embedding of $\ker(T)$ into $\Der(\H,T)$.
\end{proof}

\begin{rem}[Signals versus derivations]
    \label{rem:comb-v-gen}
The notion of $T$-derivation in~\cite[Definition 5.1]{Strata} can be 
applied directly to the \textit{array} $\Gamma$ of a hypergraph $\H$, in 
which case each $\delta_a$ would be a linear operator on the vector 
space $\K^V$. The notion of signal in Definition~\ref{def:derivation}, 
where each $\delta_a$ is a function $V\to \K$, is 
equivalent to requiring that $\delta_a$ is a \textit{diagonal} operator.
Thus, our derivations only \textit{scale} vertices of $\H$, 
whereas the derivations in~\cite{Strata} involve arbitary linear combinations.
In this sense we consider our hypergraph signals to be 
`combinatorial' analogues of tensor derivations.
\end{rem}

\begin{rem}[Computing signals]
    \label{rem:computing-der}
Using signals rather than derivations, we can push the range 
of practical applications of our linear invariants of hypergraphs.
The unknown $\delta\in\Der(\H,T)$ can be written as 
linear combinations of a standard basis $(\varepsilon^{(a,x)}\mid a\in [\ell],x\in V)$
of $(\K^V)^{\ell}$ where, if 
$\varepsilon=\varepsilon^{(a,x)}$, then $\varepsilon_b(y)=1$ if and $a=b$ and $x=y$, 
and $\varepsilon_b(y)=0$ otherwise. Thus, we can represent 
$\delta\in \Der(\H,T)$ as a column vector 
$(\delta_{a}(x)\mid a\in [\ell],x\in V)$ of degree $\ell\cdot|V|$.

Similarly, $T$ is represented 
as a $r\times\ell$ matrix $M=(M_{i,a}\mid i\in [r],\,a\in [\ell])$.
Each $i\in [r]$, $e\in E$, $\sigma\in \Sym(\ell)$ 
in condition~\eqref{eq:derivation}
contributes one linear equation in the 
unknowns $\delta_a(x)$. Indeed, $\Der(\H,T)$ is precisely 
the nullspace of a matrix with $r\cdot|E|\cdot\ell!$ rows 
and $\ell\cdot |V|$ columns,
whose entry in the row labelled by $(i,e,\sigma)$ and 
the column labelled by $(a,x)$ is $M_{i,a}$.

For many applications the uniformity constant $\ell$ 
of the hypergraph $\H$ will be quite small (say 3 or 4), 
and $r$ will typically be smaller than $\ell$. In such cases, 
if $\H$ has $n$ vertices and $m$ edges, we can solve for $\Der(\H,T)$ 
using $O(mn\cdot\min\{m,n\})$ operations.
This is much better than the complexity for 
derivations in~\cite{Strata}.
\end{rem}

\section{Centroids}
\label{sec:centroids}
We focus now on a particular transformation $C$ and explore 
the $C$-signals of a hypergraph. 
Define $C:\K^{\ell}\to\K^{\ell-1}$ to be the map $\lambda\mapsto C(\lambda)$, 
where
\begin{align}
\label{eq:centroid}
C(\lambda)(i) &= \lambda(i)-\lambda(i+1), & \mbox{for}~ i\in [\ell-1]
\end{align}
Note that $\lambda\in \ker(C)$ 
if, and only if, $\lambda(1)=\cdots=\lambda(\ell)=k$ for some $k\in \K$,
so $\Der_0(C) =\{\, k\mathbf{1}_a\mid a\in [\ell],\,k\in \K  ~\}$
is 1-dimensional.
\smallskip

For general tensors, the transformation $C$ parameterizes null patterns in 
which the nonzero entries cluster as blocks down the leading 
diagonal~\cite[Section 7.3]{Strata}. Similarly, 
$C$-signals detect the connected components of a hypergraph.

\begin{lem}
     \label{lem:centroid-constant}
     Let $\H=(V,E)$ be an $\ell$-uniform hypergraph, and
     $\delta=(\delta_a\mid a\in [\ell])$ a $C$-signal of $\H$. 
     If $x,y\in V$ are in the same connected component of $\H$, then 
     \begin{align}
     \label{eq:centroid-constant}
     \forall a,b\in [\ell],&& \delta_a(x)=\delta_b(y).
     \end{align}
\end{lem}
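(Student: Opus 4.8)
The plan is to reduce the statement to a claim about a single edge, and then propagate that claim along a walk. First I would unwind the definition of a $C$-signal: since $\ker(C)$ consists exactly of the constant functions $[\ell]\to\K$, the condition $C(\delta*e^{\sigma})=0$ is equivalent to saying that the function $a\mapsto \delta_a(e^{\sigma}(a))=\delta_a(e(a^{\sigma}))$ is constant on $[\ell]$. Fixing an edge $e$ and letting $\sigma$ range over $\Sym(\ell)$, I would record this data in a single $\ell\times\ell$ array $M_{a,i}:=\delta_a(e(i))$, with $a$ indexing axes and $i$ indexing positions in $e$. The signal condition then reads: for every $\sigma\in\Sym(\ell)$, the generalized diagonal $(M_{a,a^{\sigma}}\mid a\in[\ell])$ is constant.

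The heart of the argument is the single-edge claim that all entries of $M$ are equal; equivalently, $\delta_a(v)$ takes one common value $c_e$ as $a$ ranges over $[\ell]$ and $v$ ranges over the vertices appearing in $e$. I would prove this in two moves. First, for $a\neq b$ and $i\neq j$ I can choose $\sigma\in\Sym(\ell)$ with $a^{\sigma}=i$ and $b^{\sigma}=j$ (such a permutation exists because this partial assignment is injective), so that $M_{a,i}$ and $M_{b,j}$ lie on the same generalized diagonal and hence agree. Second, to compare two entries $M_{a,i}$ and $M_{a,j}$ in a single row I would route through a third entry: choosing $b\neq a$ and $k\notin\{i,j\}$, the previous step gives $M_{a,i}=M_{b,k}=M_{a,j}$. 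A symmetric detour handles entries in a common column. Combining these shows every pair of entries is equal.

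Finally I would invoke connectedness. Given a walk $x_1,e_1,\ldots,e_{m-1},x_m$ from $x$ to $y$, each consecutive pair $x_i,x_{i+1}$ both appear in $e_i$, so the single-edge claim forces $\delta_a(x_i)=c_{e_i}=\delta_b(x_{i+1})$ for all $a,b$. Because each interior vertex lies in two consecutive edges, the constants $c_{e_i}$ coincide, and an induction along the walk yields $\delta_a(x)=\delta_b(y)$ for all $a,b\in[\ell]$.

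I expect the single-edge claim to be the main obstacle, and in particular the row comparison is exactly where the standing hypothesis $\ell\geq 3$ is essential: the choice of an index $k\notin\{i,j\}$ requires at least three positions, and for $\ell=2$ the conclusion genuinely fails (a $2$-signal on a single edge $(u,w)$ can satisfy $\delta_1(u)=\delta_2(w)$ and $\delta_1(w)=\delta_2(u)$ without forcing $\delta_1(u)=\delta_1(w)$). The remaining pieces—translating the kernel condition into the constant-diagonal statement and the walk induction—are routine.
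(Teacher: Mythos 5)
Your proof is correct and follows essentially the same route as the paper's: translate $\delta * e^{\sigma}\in\ker(C)$ into the statement that each generalized diagonal of the array $\bigl(\delta_a(e(i))\bigr)_{a,i}$ is constant, align two entries on a common diagonal via a suitable permutation when both the axis and the position differ, detour through a third index (which is exactly where $\ell\geq 3$ enters, just as in the paper) when one of them coincides, and finish by induction along a walk. The matrix bookkeeping and the explicit row/column cases are only a cosmetic repackaging of the paper's argument with the transposition $\tau$ and the spare coordinate $c$.
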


\begin{proof}
Let $\delta$ be a $C$-signal of $\H$. 
For each $e\in E$ and $\sigma\in\Sym(\ell)$, we have
$\delta*e^{\sigma}\in \ker(C)$, 
so $\delta_1(e^{\sigma}(1))=\cdots=\delta_{\ell}(e^{\sigma}(\ell))$.

Suppose first that $x$ and $y$ belong to some edge $e\in E$.
Then, for each distinct pair $a,b\in [\ell]$, there exists 
$\sigma\in\Sym(\ell)$ such that $e^{\sigma}(a)=e(a^{\sigma})=x$ and 
$e^{\sigma}(b)=y$, so it follows that $\delta_a(x)=\delta_b(y)$.
If $\tau$ is the transposition of $\Sym(\ell)$ interchanging $a$ 
and $b$, then $e^{\tau\sigma}(a)=e^{\sigma}(b)=y$, $e^{\tau\sigma}(b)=e^{\sigma}(a)=x$, 
and $e^{\tau\sigma}(c)=e^{\sigma}(c)$ for all $c\in [\ell]-\{a,b\}$.
Since $\ell\geq 3$, there exists $c\in [\ell]-\{a,b\}$, so we have 
\begin{align*}
\delta_a(x)&=\delta_a(e^{\sigma}(a))=\delta_c(e^{\sigma}(c))=
\delta_c(e^{\tau\sigma}(c))=\delta_a(e^{\tau\sigma}(a))=\delta_a(y).
\end{align*}
The lemma now follows by induction on the length of a
walk in $\H$ from $x$ to $y$.
\end{proof}

\begin{rem}
    \label{rem:valence}
The bound $\ell\geq 3$ is important to note. The tensor  
invariants upon which our hypergraph invariants are based
are defined only when the \textit{valence} of the tensor 
is greater than 2. They are inherently \textit{multi}-linear rather 
than linear invariants, and in particular cannot be applied to graphs.
Indeed the $\ell\geq 3$ assumption was needed at the end of the proof 
of Lemma~\ref{lem:centroid-constant}.
\end{rem}

\begin{thm}
    \label{thm:centroid-connected}
The number of connected components of an $\ell$-uniform hypergraph 
$\H$ is equal to the dimension of $\Der(\H,C)$.
\end{thm}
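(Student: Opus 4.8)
The plan is to exhibit an explicit linear isomorphism between $\Der(\H,C)$ and $\K^d$, where $d$ is the number of connected components of $\H$. Let $C_1,\dots,C_d$ denote the connected components, regarded as a partition of $V$.

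First I would use Lemma~\ref{lem:centroid-constant} to show that every $C$-signal is completely determined by a single scalar per component. Setting $x=y$ in~\eqref{eq:centroid-constant} gives $\delta_a(x)=\delta_b(x)$ for all $a,b$, so the $\ell$ functions $\delta_1,\dots,\delta_\ell$ coincide; setting $a=b$ gives $\delta_a(x)=\delta_a(y)$ whenever $x,y$ lie in the same component, so each $\delta_a$ is constant on every $C_j$. Hence there are scalars $k_1,\dots,k_d\in\K$ with $\delta_a(x)=k_j$ for all $a\in[\ell]$ whenever $x\in C_j$, and the assignment $\delta\mapsto(k_1,\dots,k_d)$ defines an injective $\K$-linear map $\Der(\H,C)\to\K^d$.

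Next I would establish the converse, that every tuple $(k_1,\dots,k_d)$ arises from a genuine $C$-signal, which is the heart of the argument. Given scalars $k_1,\dots,k_d$, define $\delta$ by $\delta_a(x)=k_j$ for all $a\in[\ell]$ and $x\in C_j$. The key observation is that each edge is confined to a single component: for $e\in E$ and any $a,b\in[\ell]$, the length-one walk $e(a),e,e(b)$ witnesses that all vertices $e(1),\dots,e(\ell)$ are reachable from one another, so they lie in a common $C_j$. Consequently, for every $\sigma\in\Sym(\ell)$ the tuple $\delta*e^{\sigma}$ equals the constant vector $k_j\mathbf{1}\in\K^{\ell}$, which lies in $\ker(C)$ since (as recorded just before Lemma~\ref{lem:centroid-constant}) the kernel of $C$ consists exactly of the constant tuples. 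Thus $C(\delta*e^{\sigma})=0$ for all $e$ and $\sigma$, so $\delta\in\Der(\H,C)$ and maps to $(k_1,\dots,k_d)$.

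Together these two steps show that $\delta\mapsto(k_1,\dots,k_d)$ is a linear bijection $\Der(\H,C)\to\K^d$, whence $\dim_{\K}\Der(\H,C)=d$ equals the number of connected components. I expect the only delicate point to be the well-definedness of the converse construction, namely verifying that an edge cannot straddle two components so that $\delta*e^{\sigma}$ is genuinely constant; once that is in place, everything else follows formally from Lemma~\ref{lem:centroid-constant} and the description of $\ker(C)$.
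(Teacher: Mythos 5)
Your proposal is correct and follows essentially the same route as the paper: both arguments rest on Lemma~\ref{lem:centroid-constant} to show a $C$-signal is determined by one scalar per component, and both verify that the component-indicator functions are genuine $C$-signals via the observation that every edge lies within a single component. The paper packages this as exhibiting the basis $\{\mathbf{1}^{[i]}\}$ rather than as an isomorphism onto $\K^d$, but the content is identical.
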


\begin{proof}
Let $V_1,\ldots,V_d$ be the connected components 
of $\H$. We exhibit a basis of $\Der(\H,C)$ 
of size $d$. 
For $i\in \{1,\ldots,d\}$, define $\mathbf{1}^{[i]}\in(\K^V)^{\ell}$, 
where for $x\in V$,
\begin{align}
\label{eq:der-i}
\forall a\in [\ell], &&
\mathbf{1}^{[i]}_a(x) &:=  \left\{ \begin{array}{ll} 1 & x\in V_i \\ 0 & \mbox{else} \end{array} \right..
\end{align}
For $e\in E$ and $\sigma\in\Sym(\ell)$, since $\mathbf{1}^{[i]}_a(e^{\sigma}(a))=1$
for $a\in[\ell]$ 
precisely when $e$ contains vertices in $V_i$, it follows 
that $C(\mathbf{1}^{[i]}*e^{\sigma})=0$, and
hence that $\mathbf{1}^{[i]}$ is a $C$-signal.

For scalars $k_1,\ldots,k_d\in \K$ and $x\in V_i$, we have 
\begin{align*}
\forall a\in [\ell],
&&
(k_1\mathbf{1}^{[1]}+\ldots+k_d\mathbf{1}^{[d]})_a(x) = k_i.
\end{align*}
Hence, $k_1\mathbf{1}^{[1]}+\ldots+k_d\mathbf{1}^{[d]}=\mathbf{0}$ if, and only if, 
$k_1=\ldots=k_d=0$, so the signals $\mathbf{1}^{[i]}$ 
are linearly independent. Finally, if $\delta$ is a $C$-signal of $\H$,
by Lemma~\ref{lem:centroid-constant} there exist scalars $k_1,\ldots,k_d\in \K$  
such that $\delta=k_1{}\mathbf{1}^{[1]}+\ldots+k_d\mathbf{1}^{[d]}$. It follows that 
$\{\,\mathbf{1}^{[i]}\mid i\in\{1,\ldots,d\}\,\}$ is a basis for 
$\Der(\H,C)$.
\end{proof}

\section{Universal signals}
\label{sec:universal}
In view of Theorem~\ref{thm:centroid-connected}, we henceforth work 
just with connected hypergraphs: 
\begin{align}
\label{eq:define-hypl}
\mathtt{HYP}_{\ell} &=~\{~\H \;\mid\; \H~\mbox{is a \underline{connected} $\ell$-uniform hypergraph}~\}.
\end{align}
The previous section focused on signals of the 
transformation $C$. In this section we consider another 
transformation $U:\K^{\ell}\to\K$, where 
\begin{align}
\label{eq:universal}
U(\lambda) &= \lambda(1)+\lambda(2)+\cdots+\lambda(\ell).
\end{align}
For each $a\in[\ell-1]$, define $\lambda^{(a)}\in\K^{\ell}$, where
\begin{align*}
\lambda^{(a)}(b) &= \left\{ \begin{array}{rl} 1 & \mbox{if}~b=a \\ -1 & \mbox{if}~b=a+1 \\ 0 & \mbox{else} \end{array} \right..  
\end{align*}
Then $\{\,\lambda^{(a)}\mid a\in[\ell-1]\,\}$ is a basis for $\ker(U)$, so 
$\{\,\delta(\lambda^{(a)})\mid a\in[\ell-1]\,\}$ is a basis for 
$\Der_0(U)$, which therefore has dimension $\ell-1$.

\begin{rem}
    \label{rem:engaged}
    If $T(\hat{a})=0\in \K^r$ for some $a\in [\ell]$, then
    condition~\eqref{eq:derivation} imposes no constraint on 
    $\delta_a$. Although such $T$ are used to 
    decompose tensors relative to a fixed 
    set of axes~\cite[Section 7.2]{Strata}, they provide no 
    useful information for the symmetric tensors  
    underlying hypergraphs. We therefore 
    assume that $T(\hat{a})\neq 0$ for all $a\in [\ell]$, 
    which is evidently the case when $T=U$.
\end{rem}

The following result is a special case of~\cite[Proposition 8.1]{Strata}, 
but we include a proof tailored to our notation.
It tells us that, for any $T$, the $\K$-space of $T$-signals 
of a hypergraph $\H$ embeds into its space of $U$-signals.

\begin{prop}
\label{prop:universal}
Let $\H\in\mathtt{HYP}_{\ell}$, and let $T:\K^{\ell}\to \K^r$ 
be a linear map. 
There are nonzero scalars $k_1,\ldots,k_{\ell}$ such that 
\begin{align*}
(\delta_1,\ldots,\delta_{\ell}) \in\Der(\H,T) & \qquad \implies \qquad 
(k_1\delta_1,\ldots,k_{\ell}\delta_{\ell})\in\Der(\H,U).
\end{align*}
\end{prop}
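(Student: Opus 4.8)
The plan is to translate the $U$-signal condition into a single orthogonality requirement on the scaling vector $k=(k_1,\ldots,k_\ell)$, and then to find such a $k$ with all coordinates nonzero inside an explicit subspace of $\K^\ell$. First I would unwind the definitions: writing $\delta'=(k_1\delta_1,\ldots,k_\ell\delta_\ell)$, for every $e\in E$ and $\sigma\in\Sym(\ell)$ one has $(\delta'*e^\sigma)(a)=k_a\,\delta_a(e^\sigma(a))=k_a\,(\delta*e^\sigma)(a)$, so $U(\delta'*e^\sigma)=\sum_{a\in[\ell]}k_a\,(\delta*e^\sigma)(a)$ is exactly the standard bilinear pairing $\langle k,\delta*e^\sigma\rangle$ on $\K^\ell$. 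On the other hand, $\delta$ being a $T$-signal says precisely that $\delta*e^\sigma\in\ker(T)$ for all $e,\sigma$. Hence $\delta'\in\Der(\H,U)$ for \emph{every} $T$-signal $\delta$ as soon as $k$ is orthogonal to all of $\ker(T)$; that is, it suffices to choose $k\in\ker(T)^\perp$ with $k_a\neq 0$ for every $a\in[\ell]$. This reduction uses only linearity, not connectedness.

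Next I would produce such a $k$. The complement $R:=\ker(T)^\perp$ is the row space of the matrix $M$ representing $T$, an identity valid over any field since $\dim\ker(T)+\dim R=\ell$ and the rows of $M$ already lie in $\ker(T)^\perp$. The coordinate-$a$ projection $r\mapsto r(a)$ restricted to $R$ is nonzero exactly when column $a$ of $M$ is nonzero, i.e.\ when $T(\hat a)\neq 0$, which is guaranteed by the standing assumption in Remark~\ref{rem:engaged}. Thus each $R_a:=\{\,r\in R\mid r(a)=0\,\}$ is a \emph{proper} subspace of $R$, and a vector of $R$ has some vanishing coordinate if and only if it lies in the finite union $R_1\cup\cdots\cup R_\ell$. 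Because $\K$ is infinite (Remark~\ref{rem:infinite-field}), a $\K$-vector space is never a union of finitely many proper subspaces, so there exists $k\in R$ avoiding every $R_a$, i.e.\ with all coordinates nonzero. These are the required scalars, and the pairing computation of the first step then yields the implication directly.

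I expect the only genuine obstacle to be this existence step, which is precisely where both standing assumptions are needed. If some $T(\hat a)=0$, then $R\subseteq R_a$ and every $k\in R$ has a zero coordinate; and over a finite field the union $R_1\cup\cdots\cup R_\ell$ could exhaust $R$ even when each $R_a$ is proper. The remaining verifications—that the pairing identity holds and that $\ker(T)^\perp$ coincides with the row space of $M$—are routine linear algebra.
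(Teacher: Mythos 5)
Your proposal is correct and takes essentially the same route as the paper: both arguments reduce the claim to producing a linear functional $k$ on $\K^{\ell}$ that vanishes on $\ker(T)$ and has every coordinate nonzero, and both obtain it from the fact that a vector space over the infinite field $\K$ is not a finite union of proper subspaces. The paper just performs the avoidance in $\K^{r}$ (choosing $w$ with $w\cdot T(\hat{a})\neq 0$ for all $a$ and passing to $T_w=w\circ T$, whose coefficient vector is exactly a row-space element as in your argument) rather than directly in the row space of $M$, which is an immaterial difference.
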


\begin{proof}
Let $T:\K^{\ell}\to\K^r$ with $T(\hat{a})\neq 0$ for all $a\in [\ell]$ (cf.~Remark~\ref{rem:engaged}).
Then $\{v\in \K^r\mid v\cdot T(\hat{a})=0\}$, 
where $\cdot$ is the usual dot product,
is a subspace of $\K^r$ of codimension 1.
Since $\K$ is infinite, $\K^r$ is not a finite union of hyperplanes, so
there exists $w\in \K^r$ such that 
\begin{align*}
\forall a\in [\ell], && 
w\cdot T(\hat{a}) \neq 0. 
\end{align*}
Define $T_w:\K^{\ell}\to \K$, where $T_w(u)=w\cdot T(u)\in \K$. 
Since $(\delta_1,\ldots,\delta_{\ell})$ is a $T$-signal, 
it is clear from~\eqref{eq:derivation} it 
is also a $T_w$-signal. Since $T_w(\hat{a})=w\cdot T(\hat{a})\neq 0$, 
putting $k_a=(w\cdot T(\hat{a}))^{-1}$,
it follows that $(k_1\delta_1,\ldots,k_{\ell}\delta_{\ell})$ 
is a $U$-signal, as required.
\end{proof}

Proposition~\ref{prop:universal} tells us that, under very mild conditions, 
the transformation $U$ is universal among the 
various choices of $T$. We conclude this section with a useful analogue of 
Lemma~\ref{lem:centroid-constant}.

\begin{lem}
     \label{lem:derivation-constant}
     For any $\H\in\mathtt{HYP}_{\ell}$ and
     $\delta\in\Der(\H,U)$, we have
     \begin{align*}
     \forall a,b\in [\ell], && \delta_a-\delta_b \in \langle \mathbf{1} \rangle.
     \end{align*}
\end{lem}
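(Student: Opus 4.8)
The plan is to mirror the structure of the proof of Lemma~\ref{lem:centroid-constant}: first show that $\delta_a-\delta_b$ is constant on the set of vertices occurring in any single edge, and then propagate this along walks using the connectedness of $\H$. Throughout I fix two distinct indices $a,b\in[\ell]$ (the case $a=b$ is trivial, since $\delta_a-\delta_b=\mathbf{0}\in\langle\mathbf{1}\rangle$), and I write $\tau=(a\,b)\in\Sym(\ell)$ for the transposition swapping $a$ and $b$.

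The key local step compares the defining condition~\eqref{eq:derivation} for two permutations that differ by $\tau$. Fix an edge $e\in E$, pick any two vertices $u,v$ occurring in $e$, say $u=e(p)$ and $v=e(q)$ with $p\neq q$, and choose $\sigma\in\Sym(\ell)$ with $a^\sigma=p$ and $b^\sigma=q$, so that $e^\sigma(a)=u$ and $e^\sigma(b)=v$. Since $\delta$ is a $U$-signal, both $U(\delta*e^\sigma)=0$ and $U(\delta*e^{\tau\sigma})=0$ hold. Because $e^{\tau\sigma}$ agrees with $e^\sigma$ off the positions $a,b$ and swaps the entries there (that is, $e^{\tau\sigma}(a)=e^\sigma(b)=v$ and $e^{\tau\sigma}(b)=e^\sigma(a)=u$), subtracting the two sums makes every term with index $c\neq a,b$ cancel, leaving
\[
\delta_a(u)+\delta_b(v)=\delta_a(v)+\delta_b(u),
\]
which rearranges to $(\delta_a-\delta_b)(u)=(\delta_a-\delta_b)(v)$. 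As $u,v$ were arbitrary vertices of $e$, this shows $\delta_a-\delta_b$ is constant on the vertices of each edge.

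To finish, I would invoke connectedness exactly as in Lemma~\ref{lem:centroid-constant}: any two vertices $x,y\in V$ are joined by a walk, consecutive vertices of which lie in a common edge, so the value of $\delta_a-\delta_b$ is unchanged from one vertex of the walk to the next. Hence $\delta_a-\delta_b$ takes a single value on all of $V$, i.e.\ $\delta_a-\delta_b\in\langle\mathbf{1}\rangle$, which is the claim (the induction being on the length of the walk from $x$ to $y$).

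I do not anticipate a serious obstacle here; the entire content is in the cancellation identity above. The one point requiring care is the index bookkeeping for $e^{\tau\sigma}$ versus $e^\sigma$: one must check, against the convention $f^\sigma(c)=f(c^\sigma)$ used throughout, that multiplying by $\tau$ on the correct side realizes the swap $e^{\tau\sigma}(a)=e^\sigma(b)$ while leaving $e^{\tau\sigma}(c)=e^\sigma(c)$ for $c\neq a,b$. It is worth noting that, unlike Lemma~\ref{lem:centroid-constant}, this argument never needs to invoke a third index $c\notin\{a,b\}$; a single transposition already yields the (weaker) conclusion that a \emph{difference} $\delta_a-\delta_b$ is constant, whereas deducing that each $\delta_a$ is itself constant genuinely required the extra coordinate.
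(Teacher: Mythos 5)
Your argument is correct and follows essentially the same route as the paper's proof: compare $U(\delta*e^{\sigma})=0$ with $U(\delta*e^{\tau\sigma})=0$ for the transposition $\tau=(a\,b)$, cancel the terms indexed by $c\notin\{a,b\}$ to get $\delta_a(x)-\delta_b(x)=\delta_a(y)-\delta_b(y)$ for vertices sharing an edge, and then induct on walk length using connectedness. Your closing observation is also accurate: this lemma, unlike Lemma~\ref{lem:centroid-constant}, needs no third coordinate $c\notin\{a,b\}$ and hence no appeal to $\ell\geq 3$.
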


\begin{proof}
The claim holds if $a=b$, so let $a$ and $b$ be distinct.
If  $\delta\in\Der(\H,U)$, then 
\begin{align*}
    \forall e\in E,~\forall \sigma\in\Sym(\ell), &&
    U(\delta*e) = \sum_{c=1}^{\ell}\delta_c(e^{\sigma}(c))=0.
\end{align*}
We must show, for distinct vertices $x,y\in V$,
that $\delta_a(x)-\delta_b(x)=\delta_a(y)-\delta_b(y)$. 

Suppose first that there is an edge $e\in E$
containing $x$ and $y$. Then there exists $\sigma\in\Sym(\ell)$ such that 
$e^{\sigma}(a)=x$ and $e^{\sigma}(b)=y$. Since $\delta$ is a 
$U$-signal of $\H$, 
\begin{align}
    \label{eq:der-sigma}
0 
~ = ~ \sum_{c\in [\ell]}\delta_c(e^{\sigma}(c))
~ = ~ \delta_a(x)+\delta_b(y)+\sum_{c\not\in\{a,b\}}\delta_c(e^{\sigma}(c)).
\end{align}
Let $\tau$ be the transposition of $\Sym(\ell)$ that interchanges 
$a$ and $b$. Then 
\[
e^{\tau\sigma}(a)=e^{\sigma}(b)=y, \quad e^{\tau\sigma}(b)=e^{\sigma}(a)=x, \quad 
e^{\tau\sigma}(c)=e^{\sigma}(c)~\mbox{for}~c\not\in\{a,b\}, 
\]
so we have 
\begin{align}
    \label{eq:der-tau-sigma}
0 
~ = ~ \sum_{c\in [\ell]}\delta_c(e^{\tau\sigma}(c))
~ = ~ \delta_a(y)+\delta_b(x)+\sum_{c\not\in\{a,b\}}\delta_c(e^{\sigma}(c)).
\end{align}
Subtracting~\eqref{eq:der-tau-sigma} from~\eqref{eq:der-sigma} we get 
$\delta_a(x)-\delta_b(x)=\delta_a(y)-\delta_b(y)$.
\smallskip

The result now follows by induction on the length of a path in $\H$ 
from $x$ to $y$.
\end{proof}

\section{Fusion and frames}
\label{sec:der-quot}
We return now to a notion 
introduced in Section~\ref{sec:background},
namely quotients of hypergaphs. Given a connected $\ell$-uniform 
hypergraph $\H=(V,E)$, one can of course form a quotient 
$\bar{\H}=(\bar{V},\bar{E})$ using any 
equivalence relation on $V$, but doing so 
without reference to $\H$ will  
tell us nothing useful about its structure. Our strategy is to use the
$T$-signals of $\H$ to define an equivalence relation on $V$.

\begin{defn}[Fusion and Frame]
    \label{def:fusion}
Let $\H=(V,E)$ be a connected $\ell$-uniform hypergraph,
and let $T:\K^{\ell}\to\K^r$ be a linear map. The equivalence relation 
\begin{align}
\label{eq:C-equiv}
x \equiv_T y  \qquad \iff \qquad \forall \delta \in \Der(\H,T), \quad 
\delta(x) = \delta(y)
\end{align}
on $V$ is called the \emph{$T$-fusion} relation.
The quotient hypergraph $\mathfrak{F}(\H,T)$ of $\H$ under 
$T$-fusion is called the \emph{$T$-frame} of $\H$.
\end{defn}


The matter of which $T$ to choose 
was effectively settled by Proposition~\ref{prop:universal}. For 
any $T$ and any $\H=(V,E)$, the $\K$-space $\Der(\H,T)$ 
embeds in $\Der(\H,U)$. It follows immediately that $U$-fusion on 
$V$ is always a refinement of $T$-fusion. Thus, we will detect the 
most subtle features of $\H$ using $U$-fusion and computing 
the corresponding frame $\mathfrak{F}(\H):=\mathfrak{F}(\H,U)$.


\begin{ex}
    \label{ex:5-vertex-revisited}
    We revisit Example~\ref{ex:5-vertex} 
    and compute the $U$-frame of the now familiar 5-vertex hypergraph. 
    A calculation using our Python 
    implementation~\cite{Python} shows that $\Der(\H,U)$ 
    is 4-dimensional. 
    The subspace $\Der_0(U)$ is 2-dimensional, so 
    $\Der(\H,U)/\Der_0(U)$ is also 2-dimensional. 
    By Lemma~\ref{lem:derivation-constant} we need only consider 
    the projection of a signal onto the first coordinate.
    The space $\Der(\H,U)/\Der_0(U)$
    is spanned by the (images of the) 
    $U$-signals $\hat{u}$ and $\hat{w}+\hat{y}$.
    Thus, all $U$-signals 
    of $\H$ are constant on the partition $\{\,\{u\}\,,\,\{v,x\}\,,\,\{w,y\}\,\}$, 
    so there are three $U$-fusion classes: 
    $\bar{u}=\{u\}$, $\bar{v}=\{v,x\}$, $\bar{w}=\{w,y\}$.
    Thus, $\mathfrak{F}(\H)$ 
    consists of three vertices $\bar{u}\,,\,\bar{v}\,,\,\bar{w}$ 
    and a single hyperedge containing all three vertices, as 
    depicted in Figure~\ref{fig:5-vertex-der-quotient}.

    \begin{figure}[H]
    \centering

    \begin{subfigure}{0.4\textwidth} 
        \centering
        \begin{tikzpicture}[scale = 0.8]
    \fill[red!20] (90:2) -- (162:2) -- (234:2) -- cycle;   
    \fill[green!20] (90:2) -- (234:2) -- (306:2) -- cycle;  
    \fill[blue!20] (90:2) -- (306:2) -- (18:2) -- cycle;    

    \node[circle, draw, fill=white] (1) at (90:2) {$u$};
    \node[circle, draw, fill=white] (2) at (162:2) {$v$};
    \node[circle, draw, fill=white] (3) at (234:2) {$w$};
    \node[circle, draw, fill=white] (4) at (306:2) {$x$};
    \node[circle, draw, fill=white] (5) at (18:2) {$y$};

   \draw[thin] (1) -- (2);  
   \draw[thin] (2) -- (3);  
   \draw[thin] (3) -- (4);  
   \draw[thin] (4) -- (5);  
   \draw[thin] (5) -- (1);  
   \draw[thin] (1) -- (3);  
   \draw[thin] (1) -- (4);  
    \end{tikzpicture}
    \end{subfigure}
    \raisebox{0.5in}[0pt][0pt]{
    \begin{tikzpicture}[remember picture, overlay, baseline=(current bounding box.center)]
        \node (arrow_center) at (0, 0) {$\rightsquigarrow$};
        \node[above=0pt of arrow_center, font=\small] {$\mathfrak{F}(-)$};
    \end{tikzpicture}
    }
    \begin{subfigure}{0.4\textwidth} 
        \centering
        
        \begin{tikzpicture}[scale=0.8]
        \fill[orange!20] (90:2) -- (210:2) -- (330:2) -- cycle;

        \node[circle, draw, fill=white, minimum size=0.5cm] (A) at (90:2) {$\bar{u}$};
        \node[circle, draw, fill=white, minimum size=0.5cm] (B) at (210:2) {$\bar{v}$};  
        \node[circle, draw, fill=white, minimum size=0.5cm] (C) at (330:2) {$\bar{w}$};  
    
        \draw[thin] (A) -- (B);
        \draw[thin] (B) -- (C);
        \draw[thin] (C) -- (A);
    \end{tikzpicture}
    \end{subfigure}

    \caption{On the left is our hypergraph $\H$ on 5 vertices with 3 hyperedges, and 
    on the right is its derivation quotient $\D(\H)$.}
    \label{fig:5-vertex-der-quotient}
\end{figure}
\end{ex}

Consider the function $\D:\mathtt{HYP}_{\ell}\to \mathtt{HYP}_{\ell}$ that 
sends a hypergraph $\H$ to its frame $\D(\H)$.
In the rest of the paper we explore the properties of $\D(-)$, and 
present and analyze an algorithm to compute images under this function.
The next result helps with both objectives.
It provides a constructive proof that there 
is always a single ``generating" signal $\delta$ whose values determine the 
fusion classes.

\begin{prop}
    \label{prop:single-der} 
There is a polynomial-time algorithm that,
given a connected $\ell$-uniform hypergraph $\H=(V,E)$, finds
$\delta\in \Der(\H,U)$ satisfying
\begin{align}
    \label{eq:single-der}
    \forall x,y\in V, &&
x\equiv_U y ~~ \iff ~~ \delta(x) = \delta(y).
\end{align}
\end{prop}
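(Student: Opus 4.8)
The plan is to exhibit $\delta$ as a sufficiently generic element of $\Der(\H,U)$ and then to make that genericity effective. First I would compute a basis $\delta^{(1)},\ldots,\delta^{(m)}$ of $\Der(\H,U)$ by solving the linear system of Remark~\ref{rem:computing-der}; this is a nullspace computation, so it runs in polynomial time. By Lemma~\ref{lem:derivation-constant} the coordinate functions of any $U$-signal differ pairwise by constant multiples of $\mathbf{1}$, so for a signal $\delta$ the condition $\delta(x)=\delta(y)$ in~\eqref{eq:single-der} is equivalent to $\delta_1(x)=\delta_1(y)$; it therefore suffices to control the first-coordinate projections. I would collect these in a $|V|\times m$ matrix $B$ whose entry in row $x$ and column $j$ is $\delta^{(j)}_1(x)$. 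Two vertices satisfy $x\equiv_U y$ exactly when rows $x$ and $y$ of $B$ are identical, so the fusion partition is read off from $B$ in polynomial time.

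Any combination $\delta=\sum_{j=1}^{m}c_j\,\delta^{(j)}$ again lies in $\Der(\H,U)$, since that space is a subspace by Lemma~\ref{lem:der-space}, and $\delta_1(x)$ equals the inner product of row $x$ of $B$ with $c=(c_1,\ldots,c_m)$. The implication $x\equiv_U y\Rightarrow\delta(x)=\delta(y)$ in~\eqref{eq:single-der} holds automatically, because fused vertices have equal rows; so $\delta$ satisfies~\eqref{eq:single-der} precisely when $(\mathrm{row}_x-\mathrm{row}_y)\cdot c\neq 0$ for every pair $x,y$ with distinct rows. I must therefore choose $c$ outside the finite union of hyperplanes $\{c:(\mathrm{row}_x-\mathrm{row}_y)\cdot c=0\}$, one per separable pair. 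Since $\K$ is infinite (Remark~\ref{rem:infinite-field}), such a $c$ exists.

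To make the choice effective I would specialize to the moment curve $c=(1,t,t^2,\ldots,t^{m-1})$. For each separable pair the nonzero vector $\mathrm{row}_x-\mathrm{row}_y$ produces a nonzero univariate polynomial $p_{xy}(t)=(\mathrm{row}_x-\mathrm{row}_y)\cdot c$ of degree at most $m-1$, which has at most $m-1$ roots. Summing over the at most $\binom{|V|}{2}$ separable pairs, at most $(m-1)\binom{|V|}{2}$ values of $t$ are forbidden, so among any $(m-1)\binom{|V|}{2}+1$ distinct elements of $\K$ at least one value $t_0$ avoids every $p_{xy}$. Because $m\leq \ell\,|V|$, the number of candidates to test is polynomial in $|V|$. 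Setting $c=(1,t_0,\ldots,t_0^{\,m-1})$ and $\delta=\sum_j c_j\,\delta^{(j)}$ yields the desired signal; algorithmically one tests the candidate values of $t$ in turn, for each computing the partition induced by the values $\delta_1(x)$ and comparing it against the fusion partition already obtained from $B$, keeping the first value that reproduces it.

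The main obstacle is not the existence of a single separating signal, which is immediate from the infinitude of $\K$, but converting it into a deterministic polynomial-time construction. The Vandermonde specialization $c=(1,t,\ldots,t^{m-1})$ is the device that reduces avoidance of the many hyperplanes to avoidance of the finitely many roots of univariate polynomials, giving an explicit and checkable bound on the number of trials.
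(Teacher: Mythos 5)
Your proof is correct, and at its core it rests on the same idea as the paper's: a sufficiently generic element of $\Der(\H,U)$ separates exactly the $U$-fusion classes, and genericity is achievable because $\K$ is infinite (Remark~\ref{rem:infinite-field}) while only finitely many ``bad'' scalars must be avoided. The difference lies in how the generic combination is produced. The paper's Algorithm~\ref{algo:single-der} builds $\delta$ greedily, one basis vector $\beta$ at a time, choosing each new coefficient $k$ to avoid the at most $|V|^2$ ratios $\frac{\delta(x)-\delta(y)}{\beta(y)-\beta(x)}$ and certifying correctness with a loop invariant. You instead fix the whole coefficient vector at once by specializing to the moment curve $c=(1,t,\ldots,t^{m-1})$, so that each separable pair contributes a nonzero univariate polynomial of degree at most $m-1$, and at most $(m-1)\binom{|V|}{2}+1$ trial values of $t$ suffice. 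Your route makes explicit the reduction to the first coordinate via Lemma~\ref{lem:derivation-constant} (which the paper uses only implicitly here, writing $\delta(x)$ for the full tuple), and it yields a clean a priori bound on the number of candidates tested; the paper's iterative version needs fewer excluded scalars per step and its loop invariant gives the equivalence~\eqref{eq:single-der} directly without a separate verification pass against the fusion partition. Both are deterministic and polynomial time, so the two arguments are interchangeable.
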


\begin{proof}
We prove that Algorithm~\ref{algo:single-der} behaves as stated.
\smallskip

Given $\H=(V,E)$ with $|V|=n$ and $|E|=m$, a basis for 
$\Der(\H,U)$ can be computed
using $O(mn\cdot\min\{m,n\})$ operations (Remark~\ref{rem:computing-der}).
We suppress the details of this computation and initialize 
a basis for $\Der(\H,U)$ on Line 1.
\smallskip

    \begin{algorithm}    
    \caption{(Generating Signal)}
    \begin{algorithmic}[1]
    \Require $\H=(V,E)\in \mathtt{HYP}_{\ell}$.
    \Ensure $\delta\in \Der(\H,U)$ such that 
    $x\equiv_U y$ for $x,y\in V$ if, and only if, $\delta(x)=\delta(y)$.
    \State $\mathcal{B}\gets$ basis for $\Der(\H,U)$
    \State $\mathcal{C} \gets \{~\}$
    \State $\delta \gets 0$
    \While{$\exists \beta\in \mathcal{B}- \mathcal{C}$}
        \State $\mathcal{C} \gets \mathcal{C}\cup\{\beta\}$
        \State $k\gets $ any scalar in 
        $\K- \left\{\frac{\delta(x)-\delta(y)}{\beta(y)-\beta(x)}\mid 
        x,y\in V,\,\beta(x)\neq\beta(y)\,\right\}$ 
        \State $\delta \gets \delta + k\beta$
    \EndWhile
    \State\Return $\delta$
    \end{algorithmic}
    \label{algo:single-der}
    \end{algorithm}

    Observe that, trivially, the following predicate is true 
    entering the first iteration of the \textbf{while} loop on Line 4:
    \begin{align*}
    \mathtt{P}(\mathcal{C},\delta)\,: && 
    \delta\in\Der(\H,U) ~ \wedge ~
    (~(\forall x)(\forall y)[~
    \delta(x)=\delta(y) ~ \iff ~ \forall \alpha\in \mathcal{C},\,\alpha(x)=\alpha(y)
    ~]~)
    \end{align*}
    Consider a fixed iteration of the loop, where the pair $(\delta,\mathcal{C})$ has values 
    $(\delta_0,\mathcal{C}_0)$ before the iteration and $(\delta_1,\mathcal{C}_1)$ at the end,
    so we have $\delta_1=\delta_0+k\beta$ and $\mathcal{C}_1=\mathcal{C}_0\cup\{\beta\}$. 
    \smallskip
    
    Suppose that $\mathtt{P}(\mathcal{C}_0,\delta_0)$ holds, and fix $x,y\in V$.
    \smallskip

    If $\delta_0(x)=\delta_0(y)$ and $\beta(x)=\beta(y)$, then 
    $\alpha(x)=\alpha(y)$ for all $\alpha\in \mathcal{C}_1=\mathcal{C}_0\cup\{\beta\}$.
    But it also follows that 
    \begin{align*}
    \delta_1(x) &= \delta_0(x) + k\beta(x) = \delta_0(y)+k\beta(y) = \delta_1(y).
    \end{align*}
    Conversely, if $\delta_1(x)=\delta_1(y)$, then
    \begin{align*}
    \delta_0(x) + k\beta(x)=\delta_1(x) &= \delta_1(y) = \delta_0(y) + k\beta(y), 
    \end{align*}
    so that $\delta_0(x)-\delta_0(y)=k[\beta(y)-\beta(x)]$.
    If $\beta(x)\neq\beta(y)$, then 
    $k = \frac{\delta_0(x)-\delta_0(y)}{\beta(y)-\beta(x)}$, but we chose $k$ in Line 6 
    so that this does not occur. 
    Hence, $\beta(x) = \beta(y)$ and $\delta_0(x)=\delta_0(y)$, 
    so $\alpha(x)=\alpha(y)$ for all $\alpha\in \mathcal{C}_1=\mathcal{C}_0\cup\{\beta\}$.
    \smallskip

    Finally, since $\delta_0\in\Der(\H,U)$ and $\delta_1=\delta_0+k\beta$ 
    for $\beta\in \Der(\H,U)$, it follows that $\delta_1\in\Der(\H,U)$.
    \smallskip

    We have shown that $\mathtt{P}(\mathcal{C},\delta)$ is an invariant of the \textbf{while} loop 
    that is true entering the loop. Hence $\mathtt{P}(\mathcal{C},\delta)$ is true at the end of 
    the loop when $\mathcal{C}=\mathcal{B}$. It follows that $\delta$ returned by 
    Algorithm~\ref{algo:single-der} behaves as stated.
    \smallskip

     The polynomial complexity is clear, since the number of iterations 
     is bounded by the dimension of $\Der(\H,U)$, which 
     is at most $n$. Observe that the scalar $k\in\K$ on Line 6 can always be 
     found because we must avoid at most $n^2$ values and we assumed that 
     $\K$ is infinite (Remark~\ref{rem:infinite-field}).
\end{proof}

The main purpose of a quotient is to simplify a complicated 
hypergraph. A well chosen quotient should both simplify and 
retain essential features of the original. Given a hypergaph 
$\H$, the quotient $\D(\H)$ in a sense captures features of $\H$ that are
detectable by linear invariants. It is natural to ask what those 
features are, and this is a topic we take up in 
Section~\ref{sec:properties}.
It is also natural to wonder about the dynamics of the function $\D(-)$. 
For example, does repetition   
\[
\H~\rightsquigarrow~\D(\H)~\rightsquigarrow~\D(\D(\H))~\rightsquigarrow~\ldots
\]
yield further simplification?
As it turns out, it does not: $\D(-)$ is a closure function.

\begin{thm}
\label{thm:idempotent}
If $\H$ is a connected $\ell$-uniform hypergraph, then 
$\D(\D(\H))=\D(\H)$.
\end{thm}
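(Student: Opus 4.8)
The plan is to show that $U$-fusion on the vertex set $\bar{V}$ of $\D(\H)=(\bar{V},\bar{E})$ is the trivial (identity) relation. Since the quotient of a hypergraph by the identity relation returns the hypergraph itself (Definition~\ref{def:quotient}), this yields $\D(\D(\H))=\D(\H)$ immediately. Write $\pi\colon V\to\bar{V}$ for the quotient map $x\mapsto\bar{x}$. Before anything else I would record that $\D(\H)$ is again a connected $\ell$-uniform hypergraph, so that $\D$ applies to it: connectedness passes to quotients because the image under $\pi$ of a walk in $\H$ is a walk in $\D(\H)$.

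The crux is a descent principle for $U$-signals. First I would observe that every $\delta\in\Der(\H,U)$ is constant on $U$-fusion classes; this is immediate from Definition~\ref{def:fusion}, since $x\equiv_U y$ means $\delta'(x)=\delta'(y)$ for all $\delta'\in\Der(\H,U)$, in particular for $\delta'=\delta$. Consequently each $\delta$ descends to a well-defined function $\bar{\delta}\colon[\ell]\to\K^{\bar{V}}$ with $\bar{\delta}_a(\bar{x})=\delta_a(x)$. The next step is to check that $\bar{\delta}$ is a $U$-signal of $\D(\H)$. For $\bar{e}\in\bar{E}$ choose a preimage $e\in E$ (one exists because $\bar{E}=\{\bar{e}\mid e\in E\}$); then for each $\sigma\in\Sym(\ell)$ one has $\bar{e}^{\sigma}(a)=\bar{e}(a^{\sigma})=\overline{e(a^{\sigma})}=\overline{e^{\sigma}(a)}$, so
\[
U(\bar{\delta}*\bar{e}^{\sigma})=\sum_{a\in[\ell]}\bar{\delta}_a(\bar{e}^{\sigma}(a))=\sum_{a\in[\ell]}\delta_a(e^{\sigma}(a))=U(\delta*e^{\sigma})=0.
\]
The left-hand value depends only on $\bar{e}$, so it does not matter that several edges of $\H$ may collapse onto the same $\bar{e}$. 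This produces a descent map $\delta\mapsto\bar{\delta}$ from $\Der(\H,U)$ into $\Der(\D(\H),U)$.

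To finish I would separate distinct vertices of the frame. Let $\bar{x}\neq\bar{y}$ in $\bar{V}$, and pick representatives $x,y\in V$. Since the classes differ, $x\not\equiv_U y$ in $\H$, so by Definition~\ref{def:fusion} there is some $\delta\in\Der(\H,U)$ with $\delta(x)\neq\delta(y)$. Its descent $\bar{\delta}$ satisfies $\bar{\delta}(\bar{x})=\delta(x)\neq\delta(y)=\bar{\delta}(\bar{y})$, witnessing $\bar{x}\not\equiv_U\bar{y}$ in $\D(\H)$. Hence $U$-fusion on $\bar{V}$ is the identity relation, and the theorem follows.

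The main obstacle I anticipate is the verification that the descended $\bar{\delta}$ is genuinely a $U$-signal of $\D(\H)$: its well-definedness and the signal condition under the edge correspondence. Everything hinges on the observation that each signal is constant on its own fusion classes, which is precisely what makes the descent map available; the separation step is then formal once that machinery is in place. (If one prefers a single explicit witness, Proposition~\ref{prop:single-der} supplies a generating $\delta$ whose descent separates all distinct classes simultaneously, but the argument above does not require it.)
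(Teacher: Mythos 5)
Your proposal is correct and follows essentially the same route as the paper: both arguments rest on the observation that every $U$-signal is constant on its own fusion classes and therefore descends to a $U$-signal of $\D(\H)$, and both then conclude that $U$-fusion on $\bar{V}$ is trivial. The only (inessential) difference is that the paper descends the single generating signal from Proposition~\ref{prop:single-der}, whose coordinates become one-to-one on $\bar{V}$, whereas you separate each pair of distinct classes with its own signal --- a variant you yourself note in your closing parenthesis.
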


\begin{proof}
For $\H=(V,E)\in \mathtt{HYP}_{\ell}$, recall that $\D(\H)=(\bar{V},\bar{E})$ for
\begin{align*}
\bar{V}=\{\,\bar{x} \mid x\in V\,\}, && 
\bar{E} =\{ \bar{e} \mid e\in E\}\subset V^{\ell},
\end{align*}
where $\bar{x}=\{y\in V\mid \forall \delta\in \Der(\H,U),\,\forall a\in [\ell],\;\delta_a(x)=\delta_a(y) \}$ 
and $\bar{e}(a)=\bar{e(a)}$ for $a\in [\ell]$. 
For $\delta\in \Der(\H,U)$, $a\in [\ell]$, and  $x\in V$, 
$\delta_a$ is constant on the class $\bar{x}$.
Hence, each $\delta\in \Der(\H,U)$ induces a function $\bar{\delta}\in (\K^{\bar{V}})^{\ell}$, 
where $\bar{\delta}_a(\bar{x}):=\delta_a(x)$.
Furthermore, for $\bar{e}\in\bar{E}$ and $\sigma\in \Sym(\ell)$, we have
\begin{align*}
    \bar{\delta}_a(\bar{e}^{\sigma}(a)) &= \bar{\delta}_a(\bar{e(a^{\sigma})}) = \delta_a(e(a^{\sigma})),
\end{align*}
so $U(\bar{\delta} * \bar{e}^{\sigma})=U(\delta*e^{\sigma})=0$ and it follows that 
$\bar{\delta}\in\Der(\D(\H),U)$.
In particular, if $\delta\in \Der(\H,U)$ is the element in 
Proposition~\ref{prop:single-der} and $\bar{\delta} \in \Der(\D(\H),U)$,
then each $\bar{\delta}_a$ is one-to-one. Hence, 
the $U$-fusion classes all have size 1, so
$\D(\D(\H))=\D(\H)$.
\end{proof}

\section{Algorithms \& Experiments}
\label{sec:applications}
The main goal of this research is to provide a systematic 
way to simplify hypergraphs using easily computable invariants. 
In this section we provide an illustration of our technique 
in practice. Our quotient algorithm is elementary,
but we present it in Algorithm~\ref{algo:quotient} for easy reference.
A Python implementation of the algorithm is available 
on GitHub~\cite{Python}.

 \begin{algorithm}    
    \caption{(Frame)}
    \begin{algorithmic}[1]
    \Require $\H=(V,E)\in \mathtt{HYP}_{\ell}$.
    \Ensure The frame $\D(\H)\in \mathtt{HYP}_{\ell}$ of $\H$.
    \State $\delta \gets$ generating signal of $\H$ from Algorithm~\ref{algo:single-der} 
    \State $\bar{V} \gets \{\,\bar{x}\,\mid x\in V\,\}$, where $\bar{x}=\{y\in V\mid \delta(x)=\delta(y)\}$
    \State $\bar{E} \gets \{\, \bar{e}\,\mid e\in E\,\}$, where $\bar{e}(a)=\bar{e(a)}$ for $a\in [\ell]$.
    \State\Return $\D(\H)\,=\,(\bar{V},\bar{E}\,)$
    \end{algorithmic}
    \label{algo:quotient}
\end{algorithm}

We tested the algorithm on a range of randomly generated 3-uniform 
hypergraphs to investigate what features of a hypergraph $\H$ are  
detected by signals and preserved in the frame $\D(\H)$. We also
experimented with $\ell$-uniform hypergraphs for $\ell>3$, but 3-uniform 
hypergraphs already exhibit an interesting range of behaviors 
and are easier to visualize.

\begin{ex}
   \label{ex:interesting-hypergraph}
   The 3-uniform hypergraph $\H$ in Figure~\ref{fig:complicated-hypergraph} 
   was randomly generated subject to some sparsity constraints.
   Its frame $\D(\H)$ is depicted in Figure~\ref{fig:quot}.
   Informally, the frame identifies vertices whose \textit{stars} (the set of hyperedges 
   containing the vertex) are of the same general type.

    \begin{figure}[H]
    \centering
    \begin{tikzpicture}[scale=0.6]

    \fill[red!30] (-1.5,2.6) -- (-3.6,0.5) -- (-3.8,-1) -- cycle;
    \fill[green!30] (-1.5,2.6) -- (2.8,1.8) -- (1.5,2.6) -- cycle;
    \fill[blue!30] (2.8,1.8) -- (0,3) -- (1.5,2.6) -- cycle;
    \fill[yellow!30] (-2.2,-3.2) -- (3.8,-1) -- (0,3) -- cycle;
    \fill[orange!30] (-3.8,-1) -- (-0.8,-3.9) -- (-2.2,-3.2) -- cycle;
    \fill[violet!30] (-0.8,-3.9) -- (3.8,-1) -- (2.2,-3.2) -- cycle;
    \fill[cyan!30] (3.2,-2.4) -- (-3.2,-2.4) -- (0.8,-3.9) -- cycle;
    \fill[pink!30] (0.8,-3.9) -- (3.2,-2.4) -- (-2.8,1.8) -- cycle;
    \fill[brown!30] (-2.8,1.8) -- (2.2,-3.2) -- (3.6,0.5) -- cycle;
    \fill[gray!30] (-3.6,0.5) -- (3.6,0.5) -- (-3.2,-2.4) -- cycle;

    \node[circle, draw, fill=white] (1) at (0,3) {1};
    \node[circle, draw, fill=white] (2) at (1.5,2.6) {2};
    \node[circle, draw, fill=white] (3) at (2.8,1.8) {3};
    \node[circle, draw, fill=white] (4) at (3.6,0.5) {4};
    \node[circle, draw, fill=white] (5) at (3.8,-1) {5};
    \node[circle, draw, fill=white] (6) at (3.2,-2.4) {6};
    \node[circle, draw, fill=white] (7) at (2.2,-3.2) {7};
    \node[circle, draw, fill=white] (8) at (0.8,-3.9) {8};
    \node[circle, draw, fill=white] (9) at (-0.8,-3.9) {9};
    \node[circle, draw, fill=white] (10) at (-2.2,-3.2) {10};
    \node[circle, draw, fill=white] (11) at (-3.2,-2.4) {11};
    \node[circle, draw, fill=white] (12) at (-3.8,-1) {12};
    \node[circle, draw, fill=white] (13) at (-3.6,0.5) {13};
    \node[circle, draw, fill=white] (14) at (-2.8,1.8) {14};
    \node[circle, draw, fill=white] (15) at (-1.5,2.6) {15};

    \foreach \a/\b in {15/13, 15/12, 13/12, 15/3, 15/2, 3/2, 3/1, 1/2, 10/5, 10/1, 5/1, 12/9, 12/10, 9/10,
                         9/5, 9/7, 5/7, 6/11, 6/8, 11/8, 8/6, 8/14, 6/14, 14/7, 14/4, 7/4, 13/4, 13/11, 4/11} {
        \draw[thin] (\a) -- (\b);
    }
    \end{tikzpicture}
    \caption{A hypergraph $\H$ on 15 vertices with 10 hyperedges.}
    \label{fig:complicated-hypergraph}
    \end{figure}

    \begin{figure}[H]
    \centering
    \begin{tikzpicture}[scale=0.6]
    \fill[red!30] (0,0) -- (1.5,2.5) -- (3,0) -- cycle;
    \fill[green!30] (3,0) -- (4.5,2.5) -- (6,0) -- cycle;
    \fill[blue!30] (6,0) -- (7.5,2.5) -- (9,0) -- cycle;
    \fill[yellow!30] (9,0) -- (10.5,2.5) -- (12,0) -- cycle;

    \draw[thin] (0,0) -- (1.5,2.5) -- (3,0) -- cycle;
    \draw[thin] (3,0) -- (4.5,2.5) -- (6,0) -- cycle;
    \draw[thin] (6,0) -- (7.5,2.5) -- (9,0) -- cycle;
    \draw[thin] (9,0) -- (10.5,2.5) -- (12,0) -- cycle;

    \node[rectangle, draw, fill=white] (1) at (0,0) {$\{2\}$};
    \node[rectangle, draw, fill=white] (2) at (1.5,2.5) {$\{3\}$};
    \node[rectangle, draw, fill=white] (3) at (3,0) {$\{1,9,15\}$};
    \node[rectangle, draw, fill=white] (4) at (4.5,2.5) {$\{5,12\}$};
    \node[rectangle, draw, fill=white] (5) at (6,0) {$\{7,10,13\}$};
    \node[rectangle, draw, fill=white] (6) at (7.5,2.5) {$\{4\}$};
    \node[rectangle, draw, fill=white] (7) at (9,0) {$\{11,14\}$};
    \node[rectangle, draw, fill=white] (8) at (10.5,2.5) {$\{6\}$};
    \node[rectangle, draw, fill=white] (9) at (12,0) {$\{8\}$};
    \end{tikzpicture}
    \caption{The frame $\D(\H)$ of the hypergraph $\H$ in 
    Figure~\ref{fig:complicated-hypergraph}.}
    \label{fig:quot}
    \end{figure}
   
\end{ex}

For moderately sparse hypergraphs the frame  
provides a meaningful simplification.  
When a hypergraph is too dense, many vertices collapse into a single equivalence class in the quotient, 
often resulting in near-complete reduction. When it is too sparse, on the other hand,
we tend to get very small fusion classes (often of size 1), yielding little to no reduction.

\begin{figure}[H]
    \centering
    \begin{tikzpicture}[scale=0.80]
        \begin{axis}[
            width=12cm,
            height=8cm,
            xlabel={Average number of hyperedges per vertex},
            ylabel={Reduction proportion (avg over 50 runs)},
            title={Reduction Proportion vs.\ Average Hyperedges per Vertex (Varying Hypergraph Sizes)},
            xmin=2.25, xmax=3.05,
            ymin=0, ymax=1,
            xtick={2.3,2.4,2.5,2.6,2.7,2.8,2.9,3.0},
            ytick={0,0.2,0.4,0.6,0.8,1.0},
            grid=major,
            axis line style={thick},
            tick label style={font=\small},
            label style={font=\small},
            title style={font=\small},
            legend style={at={(0.5,-0.2)}, anchor=north, draw=none, font=\small, legend columns=-1}
        ]

        \addplot[
            color=red!70!black,
            mark=*,
            mark size=2pt,
            line width=1pt
        ] coordinates {
            (2.3,0.78)
            (2.4,0.73)
            (2.5,0.58)
            (2.6,0.63)
            (2.7,0.38)
            (2.8,0.19)
            (2.9,0.09)
            (3.0,0.03)
        };
        \addlegendentry{50 vertices}

        \addplot[
            color=blue!70!black,
            mark=square*,
            mark size=2pt,
            line width=1pt
        ] coordinates {
            (2.3,0.91)
            (2.4,0.91)
            (2.5,0.88)
            (2.6,0.76)
            (2.7,0.60)
            (2.8,0.31)
            (2.9,0.02)
            (3.0,0.02)
        };
        \addlegendentry{100 vertices}

        \addplot[
            color=green!60!black,
            mark=triangle*,
            mark size=2.5pt,
            line width=1pt
        ] coordinates {
            (2.3,0.95)
            (2.4,0.94)
            (2.5,0.94)
            (2.6,0.87)
            (2.7,0.63)
            (2.8,0.21)
            (2.9,0.02)
            (3.0,0.02)
        };
        \addlegendentry{150 vertices}

        \addplot[
            color=purple!70!black,
            mark=diamond*,
            mark size=2.5pt,
            line width=1pt
        ] coordinates {
            (2.3,0.96)
            (2.4,0.96)
            (2.5,0.96)
            (2.6,0.96)
            (2.7,0.77)
            (2.8,0.10)
            (2.9,0.04)
            (3.0,0.02)
        };
        \addlegendentry{200 vertices}

        \end{axis}
    \end{tikzpicture}
    \caption{Reduction proportion as a function of average hyperedges per vertex for hypergraphs of various sizes.}
    \label{fig:lineplot}
\end{figure}

To quantify this behavior, let us define the \textit{reduction proportion} 
of hypergraph $\H$ to be the ratio of the number of hyperedges 
in the frame $\D(\H)$ to the number of hyperedges in $\H$.
The hypergraphs we encounter in the wild are presumably not random, but
it is still instructive to see how the reduction proportion behaves
on randomly generated hypergraphs of varying sizes and sparsities.
We generated random 3-uniform hypergraphs with vertex counts of 
50, 100, 150, and 200, varying the number of hyperedges to
achieve average hyperedge-per-vertex counts from 2.3 to 3.0.
For each combination of vertex count and average hyperedges per vertex,
we generated 50 random hypergraphs and computed the average reduction proportion.
Figure~\ref{fig:lineplot} shows how this reduction proportion varies with the average number of hyperedges per 
vertex for hypergraphs of different sizes.
The figure suggests that meaningful reduction occurs only within a narrow 
sparsity window.

\section{Properties of derivation quotients}
\label{sec:properties}
We turn now to the question of what the function $\D(\H)$ does 
to a hypergraph $\H$. Let us start by revisiting our favorite 
5-vertex hypergraph in
Example~\ref{ex:5-vertex-revisited}. 
Physically, one can think of the hypergaph $\D(\H)$ on 3 vertices
as being obtained from $\H$ 
by a 2-step folding process: first fold vertex $y$ onto vertex $w$ 
along the join $ux$, and then fold $x$ onto $v$ along join $uw$.
The process is depicted in Figure~\ref{fig:5-vertex-folding}.

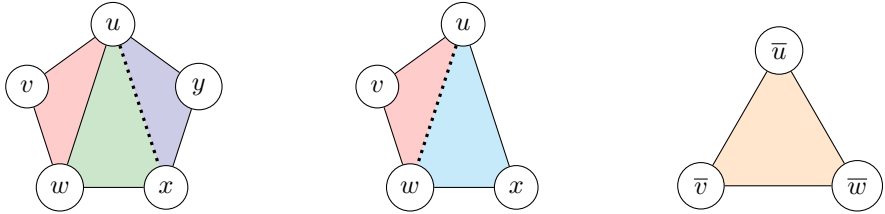
\begin{figure}[H]
    \centering
    \begin{subfigure}[b]{0.3\textwidth} 
    \centering
    \begin{tikzpicture}[scale = 0.6]
    \fill[red!20] (90:2) -- (162:2) -- (234:2) -- cycle;   
    \fill[green!20] (90:2) -- (234:2) -- (306:2) -- cycle;  
    \fill[blue!20] (90:2) -- (306:2) -- (18:2) -- cycle;    

    \node[circle, draw, fill=white] (1) at (90:2) {$u$};
    \node[circle, draw, fill=white] (2) at (162:2) {$v$};
    \node[circle, draw, fill=white] (3) at (234:2) {$w$};
    \node[circle, draw, fill=white] (4) at (306:2) {$x$};
    \node[circle, draw, fill=white] (5) at (18:2) {$y$};

   \draw[thin] (1) -- (2);  
   \draw[thin] (2) -- (3);  
   \draw[thin] (3) -- (4);  
   \draw[thin] (4) -- (5);  
   \draw[thin] (5) -- (1);  
   \draw[thin] (1) -- (3);  
   \draw[very thick, dotted] (1) -- (4);  
    \end{tikzpicture}
    \end{subfigure}
    \hfill
    \begin{subfigure}[b]{0.3\textwidth}
        \centering
        \begin{tikzpicture}[scale = 0.6]
    \fill[red!20] (90:2) -- (162:2) -- (234:2) -- cycle;   
    \fill[cyan!20] (90:2) -- (234:2) -- (306:2) -- cycle;  

    \node[circle, draw, fill=white] (1) at (90:2) {$u$};
    \node[circle, draw, fill=white] (2) at (162:2) {$v$};
    \node[circle, draw, fill=white] (3) at (234:2) {$w$};
    \node[circle, draw, fill=white] (4) at (306:2) {$x$};

   \draw[thin] (1) -- (2);  
   \draw[thin] (2) -- (3);  
   \draw[thin] (3) -- (4);  
   \draw[very thick, dotted] (1) -- (3);  
   \draw[thin] (1) -- (4);  
    \end{tikzpicture}
    \end{subfigure}
    \hfill
    \begin{subfigure}[b]{0.3\textwidth} 
        \centering
        \begin{tikzpicture}[scale=0.6]
        \fill[orange!20] (90:2) -- (210:2) -- (330:2) -- cycle;

        \node[circle, draw, fill=white, minimum size=0.5cm] (A) at (90:2) {$\bar{u}$};
        \node[circle, draw, fill=white, minimum size=0.5cm] (B) at (210:2) {$\bar{v}$};
        \node[circle, draw, fill=white, minimum size=0.5cm] (C) at (330:2) {$\bar{w}$};
    
        \draw[thin] (A) -- (B);
        \draw[thin] (B) -- (C);
        \draw[thin] (C) -- (A);
    \end{tikzpicture}
    \end{subfigure}
    
    \caption{On the left is our familiar hypergraph $\mathcal{H}$, 
    in the middle is $\H$ folding along the ``join" $u$--$x$, 
    and on the right is $\D(\mathcal{H})$ obtained 
    from the intermediate hypergraph by folding along 
    the ``join'' $u$--$w$.}
    \label{fig:5-vertex-folding}
\end{figure}

It turns out that this folding mechanism in the frame of a hypergraph  
is the consequence of a general phenomenon:

\begin{prop}
\label{prop:folding}
    Let $\H = (V,E)\in\mathtt{HYP}_{\ell}$. For distinct $x,y\in V$,
    suppose there are edges $e,f\in E$ and $\sigma\in\Sym(\ell)$ satisfying 
    \begin{align*}
    e^{\sigma}(1)=x, \quad f(1)=y, \quad e^{\sigma}(a)=f(a)~\mbox{for}~a\in \{2,\ldots,\ell\}.
    \end{align*}
    Then $x\equiv_Uy$, so 
    $\bar{x}=\bar{y}$ in the frame of $\H$.
\end{prop}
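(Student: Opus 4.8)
The plan is to prove directly that $x \equiv_U y$, i.e.\ that every $U$-signal $\delta$ of $\H$ satisfies $\delta_a(x) = \delta_a(y)$ for all $a \in [\ell]$; the conclusion $\bar{x} = \bar{y}$ in the frame $\D(\H) = \mathfrak{F}(\H,U)$ is then immediate from Definition~\ref{def:fusion}. So I would fix an arbitrary $\delta \in \Der(\H,U)$ and aim to deduce the required coordinatewise equalities from the two given edges.

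The heart of the argument is the same two-equation cancellation used in the proof of Lemma~\ref{lem:derivation-constant}. Applying the $U$-signal condition~\eqref{eq:derivation} to the pair $(e,\sigma)$ gives $\delta_1(e^{\sigma}(1)) + \sum_{c=2}^{\ell}\delta_c(e^{\sigma}(c)) = 0$, while applying it to $(f,\mathrm{id})$ gives $\delta_1(f(1)) + \sum_{c=2}^{\ell}\delta_c(f(c)) = 0$. Because $e^{\sigma}(a) = f(a)$ for every $a \in \{2,\ldots,\ell\}$, the two tail sums agree term by term, so subtracting the equations cancels everything except the first coordinate and yields $\delta_1(e^{\sigma}(1)) = \delta_1(f(1))$, that is, $\delta_1(x) = \delta_1(y)$.

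The remaining task is to upgrade this equality at coordinate $1$ to equality at every coordinate $a \in [\ell]$, and this is exactly where Lemma~\ref{lem:derivation-constant} does the work. Since $\delta_a - \delta_1 \in \langle \mathbf{1} \rangle$, the difference $\delta_a(v) - \delta_1(v)$ is a constant independent of $v \in V$; evaluating at $x$ and at $y$ therefore gives $\delta_a(x) - \delta_1(x) = \delta_a(y) - \delta_1(y)$, and combining this with $\delta_1(x) = \delta_1(y)$ forces $\delta_a(x) = \delta_a(y)$. As $\delta$ was arbitrary, this establishes $x \equiv_U y$, and hence $\bar{x} = \bar{y}$ in $\D(\H)$.

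I expect no serious obstacle: the proof is a single cancellation followed by an appeal to Lemma~\ref{lem:derivation-constant}. The one genuinely essential point is recognizing that the hypothesis, after cancellation, only controls coordinate $1$, whereas the fusion relation demands agreement of $\delta_a$ for \emph{all} $a$; this gap cannot be bridged without the structural fact that the axes of a $U$-signal differ from one another only by a global constant. The only bookkeeping to watch is the index convention $e^{\sigma}(a) = e(a^{\sigma})$ and the pairing of $e$ with $\sigma$ against $f$ with the identity, which is what makes the tail sums line up.
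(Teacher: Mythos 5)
Your proof is correct and takes essentially the same route as the paper's: the identical two-equation cancellation $U(\delta*e^{\sigma})=0=U(\delta*f)$ with matching tails yields $\delta_1(x)=\delta_1(y)$, followed by the passage to all coordinates. The only difference is that you explicitly justify that last step by invoking Lemma~\ref{lem:derivation-constant} (the axes of a $U$-signal differ by a global constant), whereas the paper compresses it into an unstated ``hence''; your version is the more complete write-up of the same argument.
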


\begin{proof}
    This is immediate from the definition of signal. 
    For $\delta\in\Der(\H,U)$,
    since $U(\delta*e^{\sigma})=0=U(\delta*f)$, 
    we have $\sum_{a\in [\ell]}\delta_a(e^{\sigma}(a))=\sum_{a\in [\ell]}\delta_a(f(a))$. 
    It follows from our assumptions on $e,f$ and $\sigma$ that 
    $\delta_1(x)=\delta_1(y)$ and hence that $\delta_a(x)=\delta_a(y)$ for 
    all $a\in [\ell]$, so $x\equiv_Uy$. 
\end{proof}

It is reasonable to ask whether $\D(\H)$ can always be obtained 
from $\H$ using a folding process. On the one hand this would provide 
a neat characterization for what $\D(-)$ is doing, but on the other 
hand it would also render the mechanics of the frame function somewhat 
dull. Fortunately,
this is not the whole story, even for 3-uniform hypergaphs, as 
Figure~\ref{fig:unstable-config} demonstrates. 

\begin{figure}[H]
    \centering

    \begin{subfigure}{0.4\textwidth} 
        \centering
        \begin{tikzpicture}[scale=0.8]

\node[circle, draw, fill=white] (5) at (0,2) {$y$};

\fill[blue!70] (-3,0) -- (0,2) -- (-2.5,2.5) -- cycle;

\fill[cyan!70] (3,0) -- (0,2) -- (2.5,2.5) -- cycle;

\fill[red!40] (-3,0) -- (3,0) -- (0,1) -- cycle;

\fill[green!70!black!40,opacity=0.8] (0,1) -- (-2.5,2.5) -- (2.5,2.5) -- cycle;

\node[circle, draw, fill=white] (1) at (-3,0) {$u$};
\node[circle, draw, fill=white] (2) at (3,0) {$v$};
\node[circle, draw, fill=white] (3) at (0,1) {$w$};
\node[circle, draw, fill=white] (4) at (-2.5,2.5) {$x$};
\node[circle, draw, fill=white] (6) at (2.5,2.5) {$z$};
\end{tikzpicture}
    \end{subfigure}
    \raisebox{0.5in}[0pt][0pt]{
    \begin{tikzpicture}[remember picture, overlay, baseline=(current bounding box.center)]
        \node (arrow_center) at (0.6, 0) {$\rightsquigarrow$};
        \node[above=0pt of arrow_center, font=\small] {$\mathfrak{F}(-)$};
    \end{tikzpicture}
    }
    \begin{subfigure}{0.4\textwidth} 
        \centering
        
        \begin{tikzpicture}[scale=0.7]
        \fill[orange!20] (90:2) -- (210:2) -- (330:2) -- cycle;

        \node[rectangle, draw, fill=white, minimum size=0.5cm] (A) at (90:2) {$w,y$};
        \node[rectangle, draw, fill=white, minimum size=0.5cm] (B) at (210:2) {$u,z$};  
        \node[rectangle, draw, fill=white, minimum size=0.5cm] (C) at (330:2) {$v,x$};  
    
        \draw[thin] (A) -- (B);
        \draw[thin] (B) -- (C);
        \draw[thin] (C) -- (A);
    \end{tikzpicture}
    \end{subfigure}

    \caption{On the left is a 3-uniform hypergraph $\H$ on 6 vertices with 4 edges. On 
the right is its frame $\D(\H)$. This shows that the operator $\D(-)$ 
does not simply ``fold up" a given 3-uniform hypergraph along 
the common ``joins'' of its edges.}
    \label{fig:unstable-config}
\end{figure}
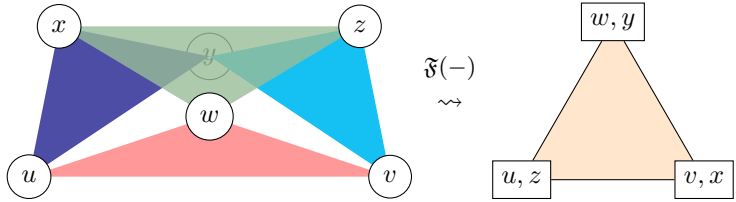

The function $\D$ can be interpreted as a projection onto 
a subset $\mathtt{STAB}_{\ell}$ 
of hypergraphs that are \textit{stable} under $\D$, namely
\begin{align}
\label{eq:stable}
\mathtt{STAB}_{\ell}(\D) &:= \{\,\H\in\mathtt{HYP}_{\ell} \mid \D(\H)=\H\,\}.
\end{align}
By Theorem~\ref{thm:idempotent},
the set $\mathtt{STAB}_{\ell}(\D)$ is characterized as the 
image of $\D$, but can we recognize when a given $\H$ belongs 
to $\mathtt{STAB}_{\ell}(\D)$ without computing $\D(\H)$? 
This seems a difficult task in general, but the 
following result shows how one can easily identify some stable 
hypergraphs. 

\begin{prop}
\label{prop:new-from-old}
    Let $\H=(V,H)\in\mathtt{STAB}_{\ell}(\D)$ and let $z\in V$. 
    Let $\hat{\H}=(\hat{V},\hat{H})$ be any $\ell$-uniform hypergraph, where 
    $\hat{V}=V\,\dot{\sqcup}\,W$ with $|W|>1$ 
    and $\hat{E}=E\,\dot{\sqcup}\,\{\hat{e}\}$ with $\hat{e}(1)=z$ and $\hat{e}(a)\in W$ 
    for $a\in \{2,\ldots,\ell\}$. Then $\hat{\H}\in\mathtt{STAB}_{\ell}(\D)$.
\end{prop}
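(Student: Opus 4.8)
The plan is to show that the $U$-signals of $\hat\H$ separate the points of $\hat V$, i.e. that every $U$-fusion class is a singleton; by the definition of $\D$ this says exactly $\D(\hat\H)=\hat\H$, which is $\hat\H\in\mathtt{STAB}_{\ell}(\D)$. First one checks that $\hat\H$ is connected, so that Lemma~\ref{lem:derivation-constant} is available for $\hat\H$: since $\H$ is connected and the new edge $\hat e$ joins $z\in V$ to every new vertex, connectedness forces $W=\{\hat e(2),\dots,\hat e(\ell)\}$ (any vertex of $W$ not occurring in $\hat e$ would be isolated). The main device will be a reformulation afforded by Lemma~\ref{lem:derivation-constant}. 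Because $\delta_a-\delta_1\in\langle\mathbf 1\rangle$ for every $\delta\in\Der(\hat\H,U)$, a signal is determined up to constant signals by $g:=\delta_1$, and a short computation shows condition~\eqref{eq:derivation} collapses to: the quantity $\sum_{b\in[\ell]}g(e(b))$ takes one common value $K$ as $e$ ranges over all edges. Writing $G(\hat\H)\subseteq\K^{\hat V}$ for the space of such $g$, two vertices are $U$-fused precisely when every $g\in G(\hat\H)$ agrees on them, so the goal becomes showing that $G(\hat\H)$ separates $\hat V$. The same reformulation applied to $\H$ turns the hypothesis $\H\in\mathtt{STAB}_{\ell}(\D)$ into the statement that $G(\H)\subseteq\K^{V}$ separates $V$.

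Next I would record two structural facts about $G(\hat\H)$. First, restriction to $V$ carries $G(\hat\H)$ into $G(\H)$ (the conditions on edges of $E$ are unchanged), and conversely every $h\in G(\H)$ extends to some $g\in G(\hat\H)$: the only requirement beyond $g|_V=h$ is that $\sum_{a=2}^{\ell}g(\hat e(a))$ hit a prescribed value, a single linear equation on the values $(g(w))_{w\in W}$ whose coefficient vector $(m_w)_{w\in W}$ — where $m_w\ge 1$ is the multiplicity of $w$ among $\hat e(2),\dots,\hat e(\ell)$ — is nonzero, hence solvable. Second, taking $h=0$ shows that every function supported on $W$ with $\sum_{w\in W}m_w\,g(w)=0$ lies in $G(\hat\H)$; denote this hyperplane $G_0\subseteq\K^{W}$ (its members vanish on $V$).

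I would then separate points of $\hat V$ in cases. For distinct $x,y\in V$, stability of $\H$ supplies $h\in G(\H)$ with $h(x)\ne h(y)$; extending $h$ to $g\in G(\hat\H)$ preserves $g(x)\ne g(y)$. For pairs involving $W$ I use $G_0$. Separating $x\in V$ from $w\in W$ requires some $g\in G_0$ with $g(w)\ne 0$ (since $g(x)=0$), and separating distinct $w,w'\in W$ requires some $g\in G_0$ with $g(w)\ne g(w')$. Each reduces to the assertion that a single coordinate functional, respectively a difference of two coordinate functionals, is not a scalar multiple of $g\mapsto\sum_{w}m_w\,g(w)$ on $\K^{W}$; positivity of all the $m_w$ together with $|W|\ge 2$ rules this out, so the desired $g\in G_0$ exist.

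The step I expect to be the main obstacle is exactly this last linear-algebra point, because it is where the hypothesis $|W|>1$ is indispensable: if $|W|=1$ the hyperplane $G_0$ collapses to $\{0\}$, the lone new vertex is pinned to an affine function of $g|_V$, and separation can genuinely fail. I would therefore isolate it as a one-line sublemma on kernels of nonzero functionals on $\K^{W}$ with strictly positive coefficients, and then assemble the three cases to conclude that $G(\hat\H)$ separates $\hat V$, i.e. $\hat\H\in\mathtt{STAB}_{\ell}(\D)$.
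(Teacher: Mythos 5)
Your proof is correct, and it takes a genuinely different route from the paper's. The paper's proof invokes Proposition~\ref{prop:single-der}: stability of $\H$ supplies a single signal $\delta\in\Der(\H,U)$ with $\delta_1$ injective on $V$, which is then extended to $\hat{\H}$ by assigning arbitrary pairwise-distinct values (avoiding $\delta_1(V)$) to all but one vertex of $W$ and solving the single linear constraint imposed by the new edge $\hat{e}$ for the last one; a single injective signal on $\hat{V}$ then certifies stability. You instead quotient out the constant signals via Lemma~\ref{lem:derivation-constant}, recast $\Der(-,U)$ as the space $G(-)$ of scalar functions with constant edge-sums, show that restriction $G(\hat{\H})\to G(\H)$ is surjective with kernel the hyperplane $G_0=\{\,g\in\K^W : \sum_{w} m_w\,g(w)=0\,\}$, and separate pairs of vertices case by case with possibly different signals. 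Each approach has its merits: the paper's is shorter given Proposition~\ref{prop:single-der} and directly manufactures the generating signal of $\hat{\H}$ that Algorithm~2 would compute; yours exposes the exact relationship between $\Der(\hat{\H},U)$ and $\Der(\H,U)$, needs no appeal to the infinitude of $\K$ at this step, and makes explicit a point the paper leaves tacit, namely that connectedness forces every vertex of $W$ to occur in $\hat{e}$, so all multiplicities $m_w$ are positive. One shared caveat, not a defect peculiar to your argument: both proofs need the relevant integer multiplicities to be nonzero as elements of $\K$ --- the paper when solving for $\hat{\delta}_1(w)$, you when checking that coordinate functionals and their differences do not vanish identically on $G_0$ --- which is automatic in characteristic $0$ but could degenerate in small positive characteristic when $\hat{e}$ has repeated vertices.
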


\begin{proof}
Since $\H=(V,H)\in\mathtt{STAB}_{\ell}(\D)$,
by Proposition~\ref{prop:single-der}
there exists $\delta\in\Der(\H,U)$ with each $\delta_a:V\to \K$
a one-to-one function. By the converse of that same result 
it suffices to exhibit $\hat{\delta}\in\Der(\hat{\H},U)$ 
with $\hat{\delta}_1$ one-to-one.

For each $a\in [\ell]$, define $\hat{\delta}_a:\hat{V}\to \K$ as follows. 
For $x\in V$, put $\hat{\delta}_a(x)=\delta_a(x)$. 
This ensures that $\hat{\delta}$ satisfies 
condition~\eqref{eq:derivation} for all $e\in E$. 

We must now extend $\hat{\delta}_a$ to $W$ so that condition~\eqref{eq:derivation}
holds for the new edge $\hat{e}$, namely
$\ch(\hat{\delta}*\hat{e}^{\sigma})=0$ 
for each $\sigma\in\mathrm{Sym}(\ell)$. 
By Lemma~\ref{lem:derivation-constant}, for each $a\in [\ell]$ there exists 
$k_a\in\K$ such that $\delta_a(x)-\delta_1(x)=k_a$ for all $x\in V$, so
\begin{align*}
U(\hat{\delta}*\hat{e}^{\sigma}) &= \sum_{a=1}^{\ell}\hat{\delta}_a(\hat{e}^{\sigma}(a))
=\sum_{a=1}^{\ell}k_a~+~\sum_{a=1}^{\ell}\hat{\delta}_1(\hat{e}^{\sigma}(a)).
\end{align*}
Since the multiset of values $\hat{e}^{\sigma}(a)$ as $a$ ranges over $[\ell]$ 
is independent of $\sigma$, we may assume that $\sigma$ is the identity.
Putting $k=\delta_a(1)+\sum_{a\in [\ell]}k_a$, we therefore require 
\begin{align} 
    \label{eq:der-hat}
    k+\sum_{a=2}^{\ell}\hat{\delta}_1(\hat{e}(a)) &= 0.
\end{align}
Fixing $w\in W$, we can freely choose the values of $\hat{\delta}_1(y)$ 
for all $y\in W-\{w\}$ and then assign $\hat{\delta}_1(w)$ so that 
equation~\eqref{eq:der-hat} holds. Furthermore, since $W-\{w\}$ is 
nonempty and $\K$ is infinite, the values of $\hat{\delta}_1$ on $W$ 
can be chosen so they are not in $\{\delta_1(x)\mid x\in V\}$ 
and are distinct from each other. 
\smallskip

We can therefore construct $\hat{\delta} \in\Der(\hat{\H},U)$ 
with $\hat{\delta}_1$ a one-to-one function. It follows that 
$\hat{\H}\in\mathtt{STAB}_{\ell}(\D)$, as required.
\end{proof}

\begin{rem}
    \label{rem:stable-two-ways}
Let us revisit the 3-uniform hypergraph $\H$ and its frame $\D(\H)$ from 
Example~\ref{ex:interesting-hypergraph}. Since $\D(\D(\H))=\D(\H)$ 
by Theorem~\ref{thm:idempotent}, we already know that 
$\D(\H)\in \mathtt{STAB}_{\ell}(\D)$. Proposition~\ref{prop:new-from-old}
provides another way to see this.
Indeed, the ``triangle'' hypergraph on 
3 vertices with a single edge containing all vertices is stable (cf.~Example~\ref{ex:U-signal}).
Using Proposition~~\ref{prop:new-from-old} we can successively glue 
new triangles onto this hypergraph, adding two new vertices and one new 
hyperedge each time, to create stable ``mountain range'' hypergraphs of the 
sort depicted in Figure~\ref{fig:quot}.
\end{rem}

\begin{rem}
   \label{rem:extensions}
   One can extend Proposition~\ref{prop:new-from-old} in various ways 
   when $\ell>3$. For example, one can show that 
   4-uniform graphs $\H$ such as 
   \medskip

   \begin{center}
   \begin{tikzpicture}
\definecolor{sq1}{RGB}{255, 100, 100}   
\definecolor{sq2}{RGB}{100, 200, 255}   
\definecolor{sq3}{RGB}{150, 255, 150}   
\definecolor{sq4}{RGB}{255, 220, 120}   

\fill[blue!50] (0,0) rectangle (1,1);
\fill[green!70] (1,0) rectangle (2,1);
\fill[red!40] (2,0) rectangle (3,1);
\fill[cyan!50] (3,0) rectangle (4,1);

\draw[black] (0,0) rectangle (1,1);
\draw[black] (1,0) rectangle (2,1);
\draw[black] (2,0) rectangle (3,1);
\draw[black] (3,0) rectangle (4,1);

\end{tikzpicture}
\end{center}
\medskip
   
\noindent are stable by proving that new 
   squares can be glued along an edge of an existing 
   square in a stable hypergraph and remain stable.
\end{rem}

\section{Closing remarks}
\label{sec:closing}
In Remark~\ref{rem:comb-v-gen} we mentioned that signals 
are restricted versions of the derivations we could possible 
use if we treated the given hypergraph as a tensor. Recall 
that there are two reasons for working with these restricted 
derivations. The first 
is that it is easier to interpret the outcomes from a combinatorial 
perspective. The second, as noted in Remark~\ref{rem:computing-der}, 
is that it is much faster to compute 
signals than derivations.
Indeed, the specific goal of this paper was to introduce linear 
invariants of a hypergraph that can be quickly computed, and which 
capture key features of the hypergraph. While there remain questions 
to be explored, the main results of the paper achieve the 
objectives that motivated the work.

However, the signals used in this paper 
do not detect features of the hypergraph that are detectable 
by derivations. The space of derivations of a tensor 
forms a Lie algebra~\cite[Theorem 8.2]{Strata} and preliminary
experiments with tensors associated to hypergraphs strongly suggest
that in many cases the structure of this Lie algebra can be used 
to recover features of the hypergraph that our methods miss 
entirely~\cite{M-W}. We mention two such families of 3-uniform hypergraphs. 

First, the hypergaph $\H$ in Example~\ref{ex:5-vertex} belongs 
to a family $\mathcal{F}_n$ of ``fans'' parameterized by the 
number $n$ of segments in the fan. Thus, $\H=\mathcal{F}_3$ because $\H$ has 3 segments.
From Proposition~\ref{prop:folding} we see that $\D(\mathcal{F}_n)=\mathcal{F}_1$ 
for all $n$ (we just fold up all of the segments). Thus, our 
frame function $\D(-)$ does not distinguish between individual members 
in the family $\mathcal{F}_n$, except insofar as the equivalence 
classes of vertices grow in size. On the 
other hand, the Lie algebra of derivations of $\mathcal{F}_n$ grows
in a predictable way as $n$ increases. More precisely, the 
derivations of $\mathcal{F}_n$ are representations of a family 
of simple Lie algebras. 

Secondly, the frame 
$\D(\H)$ in Example~\ref{fig:quot} belongs to a family $\mathcal{M}_n$ of  
``mountain ranges''. It follows from Proposition~\ref{prop:new-from-old}
that $\mathcal{M}_n\in\mathtt{STAB}_3(\D)$. In particular, $\D(\mathcal{M}_n)$ 
does not distinguish the peaks of the mountains from their bases.
On the other hand, these features can (with some effort) be recovered 
from the Lie algebra structure of its derivations.

Thus, the application of the \textit{full} invariants introduced 
in~\cite{Strata} to the \textit{tensors} representing hypergraphs 
provides a fertile area for future research.

\subsection*{Acknowledgments}

P.A. Brooksbank was supported by 
National Science Foundation Grant DMS 2319372.
The authors are grateful to Todd Schmid and Lucas 
Waddell at Bucknell University for fruitful 
discussions in the development of this work, 
and to James Wilson at Colorado State University 
for some helpful suggestions 
on an early draft of the article.

\bibliographystyle{abbrv}
\begin{bibdiv}
\begin{biblist}

\bib{BS-Symmetry}{article}{
   author={Banerjee, Anirban},
   author={Parui, Samiron},
   title={Symmetries of hypergraphs and some invariant subspaces of matrices
   associated with hypergraphs},
   journal={Linear Algebra Appl.},
   volume={726},
   date={2025},
   pages={328--358},
   issn={0024-3795},
   review={\MR{4941857}},
   doi={10.1016/j.laa.2025.07.030},
}

\bib{BS-Building-Blocks}{article}{
   author={Banerjee, Anirban},
   author={Parui, Samiron},
   title={On some building blocks of hypergraphs},
   journal={Linear Multilinear Algebra},
   volume={73},
   date={2025},
   number={7},
   pages={1369--1402},
   issn={0308-1087},
   review={\MR{4898769}},
   doi={10.1080/03081087.2024.2417213},
}

\bib{Berge-Book}{book}{
   author={Berge, Claude},
   title={Hypergraphs},
   series={North-Holland Mathematical Library},
   volume={45},
   note={Combinatorics of finite sets;
   Translated from the French},
   publisher={North-Holland Publishing Co., Amsterdam},
   date={1989},
   pages={x+255},
   isbn={0-444-87489-5},
   review={\MR{1013569}},
}

\bib{OpenDleto}{webpage}{
    author={Brooksbank, Peter A.},
    author={Kassabov, Martin D.},
    author={Wilson, James B.},
    title={OpenDleto},
    subtitle={Algorithms for structure recovery in tensors},
    url={https://github.com/thetensor-space/OpenDleto},
    date={}
}

\bib{Strata}{article}{
    author={Brooksbank, Peter A.},
    author={Kassabov, Martin D.},
    author={Wilson, James B.},
    title={Detecting null patterns in tensor data},
    url={https://arxiv.org/abs/2408.17425},
    date={}
}

\bib{Python}{webpage}{
    author={Brooksbank, Peter A.}
    author={Chaplin, Clara R.},
    title={HyperQuotients},
    subtitle={Quotients of hypergraphs from linear invariants},
    url={https://github.com/crc035/Hypergraph-Linear-Invariants},
    date={}
}

\bib{Chung-Book}{book}{
    author={Chung, F.},
    title={Spectral Graph Theory},
    series    = {CBMS Regional Conference Series in Mathematics},
    volume    = {92},
    year      = {1997},
    publisher = {American Mathematical Society},
    address   = {Providence, RI}
}

\bib{CD-Hypergraph-Spectra}{article}{
   author={Cooper, Joshua},
   author={Dutle, Aaron},
   title={Spectra of uniform hypergraphs},
   journal={Linear Algebra Appl.},
   volume={436},
   date={2012},
   number={9},
   pages={3268--3292},
   issn={0024-3795},
   review={\MR{2900714}},
   doi={10.1016/j.laa.2011.11.018},
}

\bib{FMW}{article}{
    title={A spectral theory for transverse tensor operators},
    author={First, Uriya},
    author={Maglione, Joshua},
    author={Wilson, James B.},
    note={arXiv:1911.02518}
    year={2020},
}

\bib{KB-tensor-decomp}{article}{
   author={Kolda, Tamara G.},
   author={Bader, Brett W.},
   title={Tensor decompositions and applications},
   journal={SIAM Rev.},
   volume={51},
   date={2009},
   number={3},
   pages={455--500},
   issn={0036-1445},
   review={\MR{2535056}},
   doi={10.1137/07070111X},
}

\bib{M-W}{article}{
   author={Mini\~{n}o, Amaury},
   author={Wilson, James B.},
   title={Private communication} 
}

\bib{PZ-hypergraph-tensor}{article}{
   author={Pearson, Kelly J.},
   author={Zhang, Tan},
   title={On spectral hypergraph theory of the adjacency tensor},
   journal={Graphs Combin.},
   volume={30},
   date={2014},
   number={5},
   pages={1233--1248},
   issn={0911-0119},
   review={\MR{3248502}},
   doi={10.1007/s00373-013-1340-x},
}

\end{biblist}
\end{bibdiv}

\end{document}